\documentclass[12pt]{amsart}
\usepackage{
	amsmath,  amssymb,  amsthm,   amscd,
	gensymb,  graphicx, etoolbox, booktabs,
	stackrel, mathtools    
}
\usepackage[usenames,dvipsnames]{xcolor}
\definecolor{darkblue}{rgb}{0,0,0.7}
\definecolor{darkred}{rgb}{0.7,0,0}
\usepackage[colorlinks=true, linkcolor=darkred, citecolor=darkblue, urlcolor=blue, pagebackref=true, breaklinks=true]{hyperref}
\usepackage[capitalise]{cleveref}
\usepackage{enumitem}
\usepackage{placeins}
\usepackage{relsize}
\setlength{\marginparwidth}{2cm}
\usepackage{todonotes}
\usepackage{soul}
\usepackage{tikz}
\usetikzlibrary{arrows}
\usetikzlibrary{shapes}
\usepackage{float}
\usepackage{tabularx}
\usepackage{kantlipsum}
\usepackage{array}
\usepackage[margin=2.5cm]{geometry}


\newtheorem{proposition}{Proposition}[section]
\newtheorem{lemma}[proposition]{Lemma}
\newtheorem{theorem}[proposition]{Theorem}
\newtheorem{corollary}[proposition]{Corollary}
\newtheorem{question}[proposition]{Question}

\newtheorem{remark}[proposition]{Remark}

\newtheorem{example}[proposition]{Example}

\newenvironment{customthm}[1]
  {\innercustomthm\itshape}
  {\endinnercustomthm}

\newenvironment{customcor}[1]
  {\innercustomcor\itshape}
  {\endinnercustomcor}
  

\newcommand{\reg}{{\rm reg}}
\newcommand{\G}{{\mathcal{G}}}

\def\H{\mathcal{H}}


\tikzstyle{place}=[draw,circle,minimum size=1mm,inner sep=1pt,outer sep=-1.1pt,fill=black]

\usetikzlibrary{shapes.geometric}
\tikzstyle{places}=[draw,rectangle,minimum size=8pt,inner sep=0pt]
\tikzstyle{placesf}=[draw,rectangle,minimum size=5pt,inner sep=0pt]
\tikzstyle{placec}=[draw,circle,minimum size=8pt,inner sep=0pt]
\tikzstyle{placecf}=[draw,circle, minimum size=5pt,inner sep=0pt]

\setlength{\parskip}{2pt}

\def\K{\mathbb{K}}
\def\reg{\mathrm{reg}}

\def\lcm{\mathrm{lcm}}

\def\L{\mathrm{lcm}}
\def\pd{\mathrm{pd}}

\def\H{\mathcal H}

\def\E{\mathcal E}

\def\G{\mathcal G}
\def\A{\mathcal A}
\def\B{\mathcal B}
\def\U{\mathcal U}
\def\x{\mathbf x}

\def\l{\langle}
\def\r{\rangle}
\def\b{\mathrm{bight}}

\def\p{\mathfrak p}
\def\q{\mathfrak q}
\def\h{\mathrm{ht}}

\begin{document}

\title[Stanley-Reisner ideals of higher independence complexes of chordal graphs]{Stanley-Reisner ideals of higher independence complexes of chordal graphs}

\author{Kanoy Kumar Das}
\address{Chennai Mathematical Institute, India}
\email{kanoydas0296@gmail.com; kanoydas@cmi.ac.in}

\author{Amit Roy}
\address{Chennai Mathematical Institute, India}
\email{amitiisermohali493@gmail.com}

\author{Kamalesh Saha}
\address{Department of Mathematics, SRM University-AP, Amaravati 522240, Andhra Pradesh, India}
\email{kamalesh.s@srmap.edu.in; kamalesh.saha44@gmail.com}

\keywords{connected ideals, chordal graphs, regularity, projective dimension, linear resolution, Cohen-Macaulay property}
\subjclass[2020]{05E40, 13F55, 05C05}

\vspace*{-0.4cm}
\begin{abstract}
    For $t\geq 2$, the $t$-independence complex $\mathrm{Ind}_t(G)$ of a graph $G$ is the collection of all $A\subseteq V(G)$ such that each connected component of the induced subgraph $G[A]$ has at most $t-1$ vertices. The topology of $\mathrm{Ind}_t(G)$ is intimately related to the combinatorial property of $G$. In this article, we consider the Stanley-Reisner ideal $J_{t}(G)$ of $\mathrm{Ind}_t(G)$ and focus on its algebraic properties. We prove that for a chordal graph $G$ and for all $t$
    \[
        \mathrm{reg}(R/J_{t}(G))=(t-1)\nu_{t}(G) \text{ and } \mathrm{pd}(R/J_{t}(G))=\mathrm{bight}(J_{t}(G)),
    \]
    where $\nu_{t}(G)$ denotes the induced matching number of the corresponding hypergraph of $J_{t}(G)$, and $\mathrm{reg}$, $\mathrm{pd}$ and $\mathrm{bight}$ stand for the regularity, projective dimension, and big height, respectively. As a consequence of the above results, we combinatorially characterize when the Stanley-Reisner ideal of the $t$-independence complex of a chordal graph has a linear resolution as well as when it satisfies the Cohen-Macaulay property. The above formulas and their consequences can be seen as a nice generalization of the classical results corresponding to the edge ideals of chordal graphs.
    
\end{abstract}

\maketitle

\section{Introduction}

An integral focus of research in the area of commutative algebra is the study of monomial ideals, particularly square-free monomial ideals, due to their strong connections with combinatorics and topology. A general objective in this area is to express or translate algebraic properties of a certain class of ideals in terms of the combinatorial or topological properties of the associated objects. There are several ways to associate a combinatorial object with a square-free monomial ideal; the most popular among them are the following two: (a) using the Stanley-Reisner correspondence to associate an abstract simplicial complex, and (b) associating a simple hypergraph (or clutter). Both these identifications have certain advantages and are frequently used to study square-free monomial ideals. Additionally, many algebraic properties of an ideal in a polynomial ring depend on the characteristics of the base field. However, if these come from the structure of the associated combinatorial object, then they are independent of the choice of the base field.
\par

The graded minimal free resolution of a graded module gives insight into its structure and measures its complexity. Determining the minimal free resolution of a graded module is a computationally challenging task. So, researchers try to get some estimation of the minimal free resolution via two important invariants: (i) the Castelnuovo-Mumford regularity (or simply, regularity) that measures the width of a minimal free resolution, and (ii) the projective dimension, which gives the length of a minimal free resolution. These two invariants have been extensively investigated for several classes of monomial ideals, more notably in the case of edge ideals of graphs. Indeed, a celebrated theorem of Fr\"{o}berg \cite{Froberg1990} gave an algebraic interpretation of chordal graphs in terms of the linearity of the minimal free resolution of edge ideals. More precisely, the edge ideal $I(G)$ of a graph $G$ has a linear resolution if and only if the complement of $G$ is chordal. On the other hand, when $G$ is chordal, a precise combinatorial formula for the regularity and projective dimension of $I(G)$ is well-known. \par 

To extend the study to square-free monomial ideals, various generalizations of edge ideals, such as path ideals, clique ideals, etc., have been introduced (see \cite{ConcaDeNegri1998, Moradi2018}). Note that the edge ideal of a graph $G$ can be realized as the Stanley-Reisner ideal of a simplicial complex obtained from $G$, often referred to as the independence complex of $G$. In the literature, there is a notion of higher independence complexes of a graph $G$, generalizing the independence complex of $G$. For $t\geq 2$, the {\it $t$-independence complex} of a graph $G$, denoted by $\mathrm{Ind}_{t}(G)$, is the collection of all $A\subseteq V(G)$ such that each connected component of $G[A]$ has at most $t-1$ vertices. The members of $\mathrm{Ind}_t(G)$ are called $t$-independent sets of $G$. Note that $\mathrm{Ind}_2(G)$ is nothing but the well-known {\it independence complex} of $G$. The topological study of the higher independence complexes of graphs turns out to be a fruitful area of research and often produces very interesting results when seen from the perspective of the independence complexes. To motivate the reader, we briefly mention some of these results below.\par 

In 2003, Meshulam \cite{Meshulam2003} established some beautiful relations between the domination numbers of $G$ and the homology of $\mathrm{Ind}_2(G)$, which settled a Hall-type conjecture of Aharoni in the affirmative. Recently, Deshpande-Shukla-Singh \cite{DSS2022} extended Meshulam's result by relating the homology groups of $\mathrm{Ind}_{t}(G)$ with the distance $t$-domination number of $G$. It is important to note that the distance $t$-domination number is a well-known invariant in graph theory (see \cite{TianXu2009} and the references therein). In \cite{DSS2022}, the authors have also shown that $\mathrm{Ind}_t(G)$ of a chordal graph $G$ is either contractible or homotopy equivalent to a wedge of spheres. Note that for $t=2$, $\mathrm{Ind}_2(G)$ of a chordal graph $G$ is sequentially Cohen-Macaulay, whereas for each $t\ge 3$, the complexes $\mathrm{Ind}_t(G)$ may not be sequentially Cohen-Macaulay (see, for instance, \cite[Proposition 4.3]{ADGRS}), which is quite surprising. In \cite{PS2018}, Paolini and Salvetti established a connection between the twisted cohomology of the classical braid groups and the cohomology of higher independence complexes related to the corresponding Coxeter graphs (see also \cite{Salvetti2015}). The complexes $\mathrm{Ind}_t(G)$ also appeared in some purely combinatorial contexts, for instance:
\begin{enumerate}
    \item[$\bullet$] They appeared in the work of Szab\'o and Tardos \cite{Szabo2003}, where the authors introduced and discussed generalizations of the problem of independent transversal in graphs.

    \item[$\bullet$] The notion of $t$-independent set has been explored from a purely graph-theoretic point of view. Specifically, it is related to the idea of clustered graph coloring (see \cite{Sampathkumar1993, Wood2018}). 
\end{enumerate}

Recently, the Stanley-Reisner ideals of these complexes have been considered in \cite{ADGRS, AJM2024, DRSV23}. In this article, we broaden this study by considering the class of chordal graphs. It turns out that the monomial generators of the Stanley-Reisner ideal of $\mathrm{Ind}_t(G)$ correspond to the connected subgraphs of size $t$ in $G$, and because of this, the ideal $J_t(G)$ is also called the `$t$-connected ideal' in \cite{AJM2024}. The ideals $J_t(G)$ are a natural generalization of edge ideals as $J_{2}(G)=I(G)$. In this regard, one should note that the $t$-path ideals of graphs are also a generalisation of edge ideals, and for $t\leq 3$, the $t$-path ideals coincide with $J_t(G)$.

It is well-known that any square-free monomial ideal can be seen as an edge ideal of a simple hypergraph. Let $\H$ be a $t$-uniform hypergraph, and $I(\H)$ denote its edge ideal in a polynomial ring $R$. Then the regularity (respectively, projective dimension) of $R/I(\H)$ is bounded below by $(t-1)\nu(\H)$ (respectively, $\b(I(\H))$), where $\nu(\H)$ denote the induced matching number of $\H$. These bounds are attained for various classes of simple graphs (i.e., $2$-uniform hypergraphs), including for the chordal graphs (see \cite[Corollary 6.9]{HaVanTuyl2008} and \cite[cf. Theorem 3.2]{FranciscoHa2007}). In the setting of general $t$-uniform hypergraphs, only a few classes are currently known for which $\operatorname{reg}(R/I(\mathcal H))$ achieves the lower bound. For example, this is established for $t$-connected ideals of $t$-gap-free chordal graphs \cite{AJM2024}, for $t$-connected ideals of gap-free and claw-free graphs \cite{AJM2024}, and for $t$-path ideals of gap-free and claw-free graphs when $t=3,4,5,$ or $6$ (see \cite{Banerjee2017}). The bound is further achieved for all $t$-path ideals of gap-free, claw-free, and whiskered $K_{4}$-free graphs \cite{Banerjee2017}, for $3$-path ideals of chordal graphs (see \cite{2024pathideals}), and for $t$-connected ideals of co-chordal graphs \cite{DRSV23}, among other cases.
  \par

Let $\H(G,t)$ be a $t$-uniform hypergraph induced from a graph $G$ such that $\H(G,2)=G$. One of the natural questions in this context is to ask for which classes of such hypergraphs $\H(G,t)$ the well-known results corresponding to $I(G)$ carry forward to higher $t$. Note that among different classes of simple graphs, chordal graphs have garnered special attention due to their connections with various branches of mathematics and computer science, as well as the fact that several algebraic invariants of their edge ideals can be expressed in terms of combinatorial invariants of the underlying graphs. Thus, it is worthwhile to first explore the above question in the context of chordal graphs.\par 

In this paper, we investigate the Stanley-Reisner ideals $J_t(G)$ corresponding to a chordal graph $G$. Specifically, we are interested in knowing whether the regularity and the projective dimension of such ideals can be expressed in terms of the combinatorial invariants of the associated hypergraphs, as mentioned above. The first main theorem along this direction is the following:

\begin{customthm}{\ref{reg_main}}
    Let $G$ be a chordal graph. Then for any $t\geq 2$,
    \[\reg(R/J_t(G))=(t-1)\nu_t(G),\]
    where $\nu_t(G)$ denotes the induced matching number of the hypergraph corresponding to $J_t(G)$.
\end{customthm}

\noindent As a corollary of the above theorem, we characterize when the ideal $J_t(G)$ corresponding to a chordal graph $G$ has a linear resolution as follows:

\begin{customcor}{\ref{cor-linear}}
        Let $G$ be a chordal graph and $t\ge 2$ be an integer. Then $J_{t}(G)$ has a linear resolution if and only if $G$ is $t$-gap-free (i.e., $\nu_{t}(G)=1$).
\end{customcor}

\noindent  
Next, we find the following combinatorial formula for the projective dimension of $R/J_t(G)$:

\begin{customthm}{\ref{pd_main}}
    Let $G$ be a chordal graph. Then for all $t\ge 2$, $\pd(R/J_t(G))=\b(J_t(G))$.
\end{customthm}
\noindent As an application of the above theorem, we combinatorially characterise when the ideal $J_t(G)$ corresponding to a chordal graph $G$ is Cohen-Macaulay. This ensures that the Cohen-Macaulay property of such ideals does not depend on the characteristic of the base field.

\begin{customcor}{\ref{cor-CM}}
        Let $G$ be a chordal graph and $t\ge 2$ be an integer. Then $J_{t}(G)$ is Cohen-Macaulay if and only if $J_{t}(G)$ is unmixed.
\end{customcor}

\noindent The above result partially generalize a famous theorem of Herzog-Hibi-Zheng \cite{HerzogHZ2006}, where they classified all Cohen-Macaulay chordal graphs. Furthermore, it follows from \Cref{reg_main} and \Cref{pd_main} that the regularity and projective dimension of the $t$-connected ideals of chordal graphs are independent of the characteristic of the base field (see \Cref{reg char} and \Cref{pd char}).

In the spirit of \Cref{reg_main} and \ref{pd_main}, one can try to obtain similar formulas for the regularity and projective dimension in the case of path ideals and clique ideals of graphs. We remark that an extensive amount of work is available in the literature on the $t$-path ideals of graphs (see \cite{2024pathideals, HangVu2024} and the references therein). Meanwhile, recently in \cite{2024pathideals}, we were able to show that the regularity and projective dimension formulas of edge ideals of chordal graphs in terms of the induced matching number and big height do not extend to $t$-path ideals for $t\geq 4$, even for the class of trees. Now, if one considers the $t$-clique ideal of a tree, then it is easy to see that the ideal is a zero ideal for $t\geq 3$. Regarding clique ideals of chordal graphs, we show that the above-mentioned formula of regularity cannot be extended to higher $t$ (\Cref{clique example}).

The paper is structured in the following way. In \Cref{prel}, we recall some standard notions and results of combinatorics and commutative algebra. In \Cref{sec reg}, we establish the regularity formula of $J_t(G)$ for chordal graphs and characterize when such ideals have linear resolutions. In \Cref{sec pd}, we derive the formula for the projective dimension of $J_t(G)$ in the case of chordal graphs and characterize when such ideals are Cohen-Macaulay. We make some concluding remarks in \Cref{sec remark}.

\section{Preliminaries}\label{prel}

In this section, we recall some preliminary notions from combinatorics and commutative algebra, which are used throughout the rest of the paper.

\subsection{Graph Theory and Combinatorics:}\label{subsection 2.1}

    A {\it graph} $G$ is a pair $(V (G), E(G))$, where $V(G)$ is called the \textit{vertex set} of $G$ and $E(G)$, a collection of subsets of $V(G)$ of size $2$, is known as the \textit{edge set} of $G$. We now recall some useful notation related to a graph $G$ that will be needed in the later sections.
    \begin{enumerate}
        \item If $x_1,\ldots,x_m\in V(G)$, then $G\setminus \{x_1,\ldots,x_m\}$ denotes the graph with the vertex set $V(G)\setminus \{x_1,\ldots,x_m\}$ and the edge set $\{\{u,v\}\in E(G)\mid x_i\notin \{u,v\}\text{ for each }i\in[m]\}$. If $m=1$, then $G\setminus\{x_1\}$ is simply denoted by $G\setminus x_1$.

        \item For $C\subseteq V(G)$, the set of {\it neighbors} of $C$, denoted by $N_G(C)$, is the set 
        $$\{w\in V(G)\setminus C \mid \{w,x\}\in E(G)\text{ for some }x\in C\}.$$
        The set of {\it closed neighbors} of $C$ is the set $N_G(C)\cup C$ and is denoted by $N_G[C]$. If $C=\{a\}$ for some $a\in V(G)$, then we simply denote the sets $N_G(\{a\})$ and $N_G[\{a\}]$ as $N_G(a)$ and $N_G[a]$, respectively.

        \item Let $W\subseteq V(G)$. Then the {\it induced subgraph of $G$ on $W$}, denoted by $G[W]$, is the graph on the vertex set $W$ with the edge
        set $\{e \in E(G) ~|~ e \subseteq W\}$. Note that $G[W]=G\setminus (V(G)\setminus W)$.

    \item Let $t\ge 2$ be an integer, and 
    $$\quad\quad \U=\{C_1,\ldots ,C_r : |C_i|=t, G[C_i] \text{ is connected, } C_i\cap C_j=\emptyset \text{ for all } 1\leq i<j\leq r \}.$$ 
    We say that $\U$ is a \textit{$t$-induced matching} of G if $E(G[\cup_{i=1}^rC_i])=\cup_{i=1}^rE(G[C_i])$. The \textit{$t$-induced matching number} of $G$, denoted by $\nu_t(G)$, is given by $$\nu_t(G)=\max\{|\U|:\U \text{ is a }t\text{-induced matching of }G\}.$$ 
    If $t=2$, then $\nu_2(G)$ is the usual {\it induced matching number} of $G$ and is also denoted in the literature by $\nu(G)$. We say $G$ is \textit{$t$-gap-free} whenever $\nu_{t}(G)=1$.   
    \end{enumerate}

\newpage
    
\noindent   
\textbf{Various classes of simple graphs:} 
\begin{enumerate}
    \item A {\it path} graph $P_n$ of length $n-1$ is a graph with the vertex set $\{x_1,\ldots,x_{n}\}$, and the edge set $\{\{x_i,x_{i+1}\}\mid 1\le i\le n-1\}$. A {\it cycle} $C_n$ of length $n$ is a graph with the vertex set $\{x_1,\ldots,x_n\}$, and the edge set $\{\{x_1,x_n\},\{x_i,x_{i+1}\}\mid 1\le i\le n-1\}$.

    \item For a positive integer $m$, a \textit{complete graph} $K_{m}$ is a graph on $m$ vertices such that there is an edge between any two distinct vertices.

    \item A graph $G$ is called \textit{chordal} if $G$ does not contain any induced cycle of length more than three. If $G$ is a chordal graph, then $G$ contains at least one vertex $x$ such that $N_G(x)$ is a complete graph (see \cite{Dirac}). Such a vertex is called a {\it simplicial vertex} of $G$. Note that any induced subgraph of a chordal graph is again a chordal graph.
    \end{enumerate}

\subsection{Stanley-Reisner ideal of $\mathrm{Ind}_t(G)$:} Let $R$ denote the polynomial ring $\mathbb K[x_1,\ldots,x_n]$, where $\mathbb K$ is a field. For $F\subseteq \{x_1,\ldots,x_n\}$, let $\x_{F}$ denote the monomial $\prod_{x_i\in F}x_i$. Recall that a {\it simplicial complex} on the vertex set $V(\Delta)=\{x_1,\ldots,x_n\}$ is a collection of subsets of $\Delta$ satisfying the following two properties: (i) each $x_i\in\Delta$ for $i\in[n]$; (ii) if $A\in\Delta$ and $B\subseteq A$, then $B\in\Delta$. Now, given such a simplicial complex $\Delta$, the Stanley-Reisner ideal $I_{\Delta}$ of $\Delta$ is the monomial ideal $\l \x_F\mid F\notin\Delta\r$. Now take $G$ to be a graph on the vertex set $V(G)=\{x_1,\ldots,x_n\}$. Then the Stanley-Reisner ideal of $\mathrm{Ind}_t(G)$, denoted by $J_t(G)$, is the ideal 
\[
J_t(G)=\left\l \x_C\mid C\subseteq V(G), |C|=t,\text{ and }G[C]\text{ is connected }\right\r
\]
in the polynomial ring $R$. The ideal $J_t(G)$ is referred to as the {\it $t$-connected ideal} of $G$ in \cite{AJM2024}.

\subsection{$J_t(G)$ as edge ideal of a hypergraph:} 
A {\it hypergraph} $\H$ is a pair $(V(\H),E(\H))$, where $E(\H)\subseteq 2^{V(\H)}$, and for any two $\E_1,\E_2\in E(\H)$, $\E_1\not\subset \E_2$. The sets $V(\H)$ and $E(\H)$ are called the \textit{vertex set} and \textit{edge set} of $\H$, respectively. For a fixed positive integer $m$, if $|\E|=m$ for each $\E\in E(\H)$, then we say that $\H$ is an {\it $m$-uniform hypergraph}. Note that if $\H$ is a $2$-uniform hypergraph, then $\H$ is just a graph. As in the case of graphs, if $A\subseteq V(\H)$, then $\H\setminus A$ denotes the hypergraph with the vertex set $V(\H)\setminus A$, and the edge set $\{\E\in E(\H)\mid \E\cap A=\emptyset\}$. Similarly, for any $A\subseteq V(\H)$, the hypergraph $\H\setminus (V(\H)\setminus A)$ is called the \textit{induced subhypergraph} of $\H$ on the vertex set $A$. For $x\in V(\H)$, we simply write $\H\setminus x$ to denote the hypergraph $\H\setminus\{x\}$. A subset $U\subseteq V(\H)$ is called a \textit{vertex cover} of $\H$ if for any edge $\E\in E(\H)$ one has $\E\cap U\neq \emptyset$. A \textit{minimal vertex cover} of $\H$ is a vertex cover that is minimal with respect to inclusion. 

Let $\H$ be a hypergraph on the vertex set $\{x_1,\ldots,x_n\}$ and let $R=\K[x_1,\ldots,x_n]$. Corresponding to each $\E\in E(\H)$, one can assign the monomial $\x_{\E}=\prod_{x_j\in \E}x_j$ in $R$. Then the ideal $\l \x_{\E}\mid \E\in E(\H) \r$ is called the {\it edge ideal} of $\H$, and is denoted by $I(\H)$. Let $I\subseteq R$ be a square-free monomial ideal with the unique minimal monomial generating set $\G(I)$. Then $I$ can be viewed as an edge ideal of a hypergraph $\H_{I}$, where $V(\H_{I})=\{x_1,\ldots,x_n\}$ and $E(\H_{I})=\{\{x_{i_1},\ldots,x_{i_r}\}\mid x_{i_1}\cdots x_{i_r}\in \G(I)\}$. In other words, we have $I=I(\H_{I})$. It is well-known in the literature that the minimal prime ideals of $I$ (equivalently, the associated primes of $I$ since $I$ is a radical ideal) are exactly the ideals generated by the minimal vertex covers of $\H_{I}$. Consequently, the \textit{height} of $I$ (resp., the \textit{big height} of $I$), denoted by $\h(I)$ (resp., $\b(I)$), is the minimum (resp., maximum) cardinality of a minimal vertex cover of~ $\H_{I}$.\par 

Let $G$ be a graph on the vertex set $\{x_1,\ldots,x_n\}$. Consider the ideal $J_{t}(G)$ in the polynomial ring $R$. Since $J_{t}(G)$ is a square-free monomial ideal, from the previous discussion, we can associate a hypergraph, say $\H(G,t)$, on the vertex set $\{x_1,\ldots,x_n\}$ such that $J_t(G)=I(\H(G,t))$. More precisely,
\begin{enumerate}
    \item[$\bullet$] $V(\H(G,t))=V(G)$,
    \item[$\bullet$] $E(\H(G,t))=\{\{x_{i_1},\ldots,x_{i_t}\}\subseteq V(G)\mid G[\{x_{i_1},\ldots,x_{i_t}\}] \text{ is connected }\}$.
\end{enumerate}

\subsection{Some algebraic invariants:}
Let $I$ be a graded ideal in $R=\K[x_1,\ldots, x_n]$, where $\mathbb K$ is a field. Then, a graded minimal free resolution of $R/I$ is an exact sequence
\[
\mathcal F_{\cdot}: \,\, 0\rightarrow F_r\xrightarrow{\partial_{r}}\cdots\xrightarrow{\partial_{2}} F_1\xrightarrow{\partial_1} F_0\xrightarrow{\partial_0} R/I\rightarrow 0, 
\]
where $F_0=R$, $F_i=\oplus_{j\in\mathbb N}R(-j)^{\beta_{i,j}(R/I)}$ for each $i\ge 1$, $\partial_0$ is the natural quotient map, and $R(-j)$ is the polynomial ring $R$ with its grading twisted by $j$. The numbers $\beta_{i,j}(R/I)$ are uniquely determined, and are called the $i^{th}$ $\mathbb N$-graded {\it Betti numbers} of $R/I$ in degree $j$. The {\it Castelnuovo-Mumford regularity} (or simply called the {\it regularity}) of $R/I$, denoted by $\reg(R/I)$, is the number $\max\{j-i\mid \beta_{i,j}(R/I)\neq 0\}$. The invariant $\max\{i\mid \beta_{i,j}(R/I)\neq 0\}$ is called the {\it projective dimension} of $R/I$, and is denoted by $\pd(R/I)$. Let $I$ be a graded ideal generated in a single degree $r$. Then, we say that $I$ has an {\it $r$-linear resolution} (or simply, a {\it linear resolution}) if $\reg(R/I)=r-1$.
\par 

The following are some well-known results regarding regularity and projective dimension, which we are going to use in the subsequent sections.

\begin{lemma}\label{reg and pd sum}\cite[cf. Lemma 2.5]{HaTrungTrung}
        Let $I_1\subseteq R_1=\mathbb K[x_1,\ldots,x_n]$ and $I_2\subseteq R_2=\mathbb K[y_1,\ldots, y_m]$ be two graded ideals. Consider the ideal $I=I_1R+I_2R\subseteq R=\mathbb K[x_1,\ldots,x_n,y_1,\ldots,y_m]$. Then 
        \begin{enumerate}
            \item[(i)] $\reg(R/I)=\reg(R_1/I_1)+\reg(R_2/I_2)$,
            \item[(ii)] $\pd(R/I)=\pd(R_1/I_1)+\pd(R_2/I_2).$
        \end{enumerate}
    \end{lemma}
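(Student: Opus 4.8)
The plan is to exploit the fact that, since $\K$ is a field, $R$ is the tensor product $R_1\otimes_{\K}R_2$ and the quotient splits as $R/I\cong (R_1/I_1)\otimes_{\K}(R_2/I_2)$. Indeed, writing $I=I_1R+I_2R$ and using $R=R_1\otimes_{\K}R_2$, one checks directly that $I_1R+I_2R$ corresponds to $I_1\otimes_{\K}R_2+R_1\otimes_{\K}I_2$, so that $R/I\cong (R_1/I_1)\otimes_{\K}(R_2/I_2)$ as graded $R$-modules. This reduces both statements to a K\"unneth-type computation of the graded Betti numbers.

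First I would take graded minimal free resolutions $\mathcal F_\bullet\to R_1/I_1$ over $R_1$ and $\mathcal G_\bullet\to R_2/I_2$ over $R_2$, and form the external tensor product complex $\mathcal C_\bullet:=\mathcal F_\bullet\otimes_{\K}\mathcal G_\bullet$, whose $i$-th term is $\bigoplus_{a+b=i}(F_a\otimes_{\K}G_b)$ with differential $\partial^{\mathcal F}\otimes 1+(-1)^a\,1\otimes\partial^{\mathcal G}$. Since a graded free $R_1$-module tensored over $\K$ with a graded free $R_2$-module is again graded free over $R$ (because $R_1(-c)\otimes_{\K}R_2(-d)\cong R(-c-d)$), each $\mathcal C_i$ is a graded free $R$-module, and explicitly
\[
\mathcal C_i=\bigoplus_{a+b=i}\ \bigoplus_{c,d} R(-c-d)^{\,\beta_{a,c}(R_1/I_1)\,\beta_{b,d}(R_2/I_2)}.
\]

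Next I would show that $\mathcal C_\bullet$ is in fact the graded minimal free resolution of $R/I$. Acyclicity follows from the algebraic K\"unneth theorem over the field $\K$: as every $\K$-module is flat there are no correction (Tor) terms, so $H_i(\mathcal C_\bullet)\cong\bigoplus_{a+b=i}H_a(\mathcal F_\bullet)\otimes_{\K}H_b(\mathcal G_\bullet)$, which vanishes for $i>0$ and equals $(R_1/I_1)\otimes_{\K}(R_2/I_2)\cong R/I$ for $i=0$. Minimality follows because the entries of $\partial^{\mathcal F}$ lie in $(x_1,\ldots,x_n)$ and those of $\partial^{\mathcal G}$ lie in $(y_1,\ldots,y_m)$ (the resolutions being minimal), whence every entry of the differential of $\mathcal C_\bullet$ lies in the graded maximal ideal $\mathfrak m=(x_1,\ldots,x_n,y_1,\ldots,y_m)$ of $R$. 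Therefore the displayed decomposition computes the graded Betti numbers, yielding the product relation $\beta_{i,j}(R/I)=\sum_{a+b=i,\ c+d=j}\beta_{a,c}(R_1/I_1)\,\beta_{b,d}(R_2/I_2)$.

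Finally, both formulas drop out of this relation. For (i), any $(i,j)$ with $\beta_{i,j}(R/I)\neq 0$ comes from indices with $\beta_{a,c}(R_1/I_1)\neq 0$ and $\beta_{b,d}(R_2/I_2)\neq 0$, and $j-i=(c-a)+(d-b)$; maximizing each summand independently gives $\reg(R/I)=\reg(R_1/I_1)+\reg(R_2/I_2)$. For (ii), since $i=a+b$, the top homological degree in which $\mathcal C_\bullet$ is nonzero is $\pd(R_1/I_1)+\pd(R_2/I_2)$, which is the desired projective dimension formula. I expect the only genuinely delicate points to be the careful grading bookkeeping needed to identify $\mathcal C_i$ as a free $R$-module and the verification of minimality; acyclicity is immediate over a field, and the extraction of the two invariants from the product relation is then purely combinatorial.
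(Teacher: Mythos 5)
Your proposal is correct, but note that the paper does not actually prove this lemma: it is quoted with the citation ``cf.\ Lemma 2.5 of Ha--Trung--Trung'' and used as a known black box, so there is no internal proof to compare against. What you wrote is the standard argument behind that cited result, and it is sound: $R/I\cong (R_1/I_1)\otimes_{\mathbb K}(R_2/I_2)$, the external tensor product $\mathcal F_\bullet\otimes_{\mathbb K}\mathcal G_\bullet$ of the two minimal graded free resolutions is a complex of graded free $R$-modules (via $R_1(-c)\otimes_{\mathbb K}R_2(-d)\cong R(-c-d)$), it is acyclic by the K\"unneth theorem over the field $\mathbb K$, and it is minimal because all differential entries land in the graded maximal ideal of $R$; this yields $\beta_{i,j}(R/I)=\sum_{a+b=i,\,c+d=j}\beta_{a,c}(R_1/I_1)\,\beta_{b,d}(R_2/I_2)$. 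The only step worth making fully explicit is the attainment of the extremes: when you pick $(a,c)$ realizing $\reg(R_1/I_1)$ (resp.\ $\pd(R_1/I_1)$) and $(b,d)$ realizing the corresponding invariant for $R_2/I_2$, the resulting summand $\beta_{a,c}\beta_{b,d}$ is strictly positive and cannot be cancelled, since every term in the sum is non-negative; hence $\beta_{a+b,c+d}(R/I)\neq 0$, which gives the reverse inequalities and completes both equalities. With that observation spelled out, your argument is a complete and self-contained proof of the lemma, arguably more informative than the paper's citation, since it exhibits the full Betti table of $R/I$ as the ``convolution'' of the two Betti tables.
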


\begin{lemma}\cite[Lemma 2.10, Lemma 5.1]{DHS}\label{regularity and pd lemma}
    Let $I\subseteq R$ be a square-free monomial ideal and let $x_i$ be a variable appearing in some generator of $I$. Then
    \begin{enumerate}
        \item[(i)] $   \reg(R/I)\le\max\{\reg(R/(I:x_i))+1,\reg(R/\langle I,x_i\rangle)\}$. Moreover,\\ $\reg(R/I)\in\{\reg(R/(I:x_i))+1,\reg(R/\langle I,x_i\rangle)\}$. 
        \item[(ii)] $\pd(R/I)\le\max\{\pd(R/(I:x_i)),\pd(R/\langle I,x_i\rangle)\}.$
    \end{enumerate}
    \end{lemma}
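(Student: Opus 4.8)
The engine of the proof is the short exact sequence of graded $R$-modules
\[
0 \longrightarrow \bigl(R/(I:x_i)\bigr)(-1) \xrightarrow{\ \cdot\, x_i\ } R/I \longrightarrow R/\langle I, x_i\rangle \longrightarrow 0 .
\]
Its exactness is routine: multiplication by $x_i$ is injective because $x_i f \in I$ exactly when $f \in (I:x_i)$, and the cokernel is $R/(I + x_iR) = R/\langle I, x_i\rangle$ because $x_i\cdot(R/I) = \langle I, x_i\rangle/I$. The degree shift $(-1)$ records that $\cdot\, x_i$ raises internal degrees by one, so $\reg\bigl((R/(I:x_i))(-1)\bigr) = \reg(R/(I:x_i)) + 1$, while the projective dimension is insensitive to the shift.

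The two inequalities fall out immediately from the long exact sequence in $\mathrm{Tor}(-,\mathbb{K})$ attached to this sequence, in which $\mathrm{Tor}_p(R/I,\mathbb{K})_q$ is sandwiched between the corresponding graded pieces of $\mathrm{Tor}_p$ of the two outer modules. For part (ii): if $p > \max\{\pd(R/(I:x_i)), \pd(R/\langle I,x_i\rangle)\}$ then both outer $\mathrm{Tor}_p$ vanish, forcing $\mathrm{Tor}_p(R/I,\mathbb{K})=0$; since the shift does not change $\pd$, this is exactly the bound in (ii). For the inequality in (i) I would run the same vanishing argument along each diagonal strand $q = p + j$: if $j$ exceeds both $\reg(R/(I:x_i)) + 1$ and $\reg(R/\langle I,x_i\rangle)$, the outer diagonal pieces vanish for every $p$, hence so does $\mathrm{Tor}_p(R/I,\mathbb{K})_{p+j}$.

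The delicate part -- and the main obstacle -- is the ``moreover'' clause asserting that $\reg(R/I)$ is \emph{equal} to one of the two candidate numbers. The tool here is the trio of standard regularity estimates for a short exact sequence $0\to A\to B\to C\to 0$, namely $\reg B\le\max\{\reg A,\reg C\}$, $\reg A\le\max\{\reg B,\reg C+1\}$, and $\reg C\le\max\{\reg A-1,\reg B\}$. Writing $\reg A = \reg(R/(I:x_i))+1$ and $\reg C = \reg(R/\langle I,x_i\rangle)$, a short case analysis comparing $\reg A$ and $\reg C$ forces $\reg B = \reg(R/I)$ to equal $\reg A$ or $\reg C$ in every case \emph{except} the diagonal one $\reg A = \reg C + 1$, i.e.\ $\reg(R/(I:x_i)) = \reg(R/\langle I,x_i\rangle)$, where all three estimates degenerate into tautologies and give no information.

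To dispatch this last case I would supply one extra ingredient: passing to the colon by a variable does not raise regularity, $\reg(R/(I:x_i)) \le \reg(R/I)$. Granting it, the diagonal case yields the lower bound $\reg(R/I) \ge \reg(R/(I:x_i)) = \reg(R/\langle I,x_i\rangle)$, which together with the already-established upper bound $\reg(R/I) \le \reg(R/(I:x_i)) + 1$ and the integrality of regularity squeezes $\reg(R/I)$ into $\{\reg(R/(I:x_i)), \reg(R/(I:x_i))+1\} = \{\reg(R/\langle I,x_i\rangle), \reg(R/(I:x_i))+1\}$, which is the desired membership. I expect this monotonicity of regularity under colons by linear forms -- provable separately by induction on the number of variables, or citable as a known property -- to carry the genuine weight behind the ``moreover'' clause; the remainder is formal bookkeeping with the long exact sequence.
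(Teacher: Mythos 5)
Your proposal is correct, and it is essentially the standard argument: the paper itself gives no proof of this lemma but simply cites [DHS, Lemma 2.10 and Lemma 5.1], whose proof runs along the same lines (the multiplication-by-$x_i$ short exact sequence, the long exact sequence bookkeeping for the two inequalities, and extra input precisely in the diagonal case $\reg(R/(I:x_i)) = \reg(R/\langle I,x_i\rangle)$, which you correctly identify as the only place the three short-exact-sequence estimates degenerate). The one external ingredient you invoke, $\reg(R/(I:x_i)) \le \reg(R/I)$, is indeed a known citable fact, but be aware it is specific to monomial ideals and colon by a \emph{variable} (proved via Hochster's formula for square-free ideals and polarization in general, rather than by a naive induction on variables), so it should be quoted in that form and not as a general monotonicity of regularity under colons by arbitrary linear forms.
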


    \begin{lemma}\label{regularity and pd lemma intersection}\textup{(cf. \cite[Chapter 18]{IPBook})}
    Let $J$ and $K$ be two graded ideals of $R$. Then 
    \begin{enumerate}
        \item[(i)] $\reg(R/(J+K))\le \max\{\reg(R/J),\reg(R/K),\reg(R/(J\cap K))-1\},$
        \item[(ii)] $  \pd(R/(J+K))\le \max\{\pd(R/J),\pd(R/K),\pd(R/(J\cap K))+1\}.$
    \end{enumerate}
\end{lemma}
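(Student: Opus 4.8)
The plan is to derive both bounds formally from the \emph{Mayer--Vietoris} short exact sequence of graded $R$-modules
\[
0 \longrightarrow R/(J\cap K) \xrightarrow{\ \varphi\ } (R/J)\oplus (R/K) \xrightarrow{\ \psi\ } R/(J+K) \longrightarrow 0,
\]
where $\varphi(\overline r)=(\overline r,\overline r)$ and $\psi(\overline a,\overline b)=\overline{a-b}$. First I would verify exactness: $\varphi$ is injective because $J\cap K$ is exactly the set of elements mapping to $0$ in both factors, $\psi$ is visibly surjective, and at the middle term one checks that $\psi(\overline a,\overline b)=0$ forces $a-b=j+k$ with $j\in J$, $k\in K$, so that $a-j=b+k$ is a common lift mapping onto $(\overline a,\overline b)$. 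All three maps are homogeneous of degree $0$, so this is an honest short exact sequence of finitely generated graded modules.

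Next I would apply the standard comparison lemmas for a short exact sequence $0\to A\to B\to C\to 0$ of graded modules, namely
\[
\reg(C)\le\max\{\reg(B),\ \reg(A)-1\}\qquad\text{and}\qquad \pd(C)\le\max\{\pd(B),\ \pd(A)+1\}.
\]
Both follow from the long exact sequence in $\operatorname{Tor}$ against the residue field $\K=R/\mathfrak m$: the segment $\operatorname{Tor}_i(B,\K)_j\to\operatorname{Tor}_i(C,\K)_j\to\operatorname{Tor}_{i-1}(A,\K)_j$ shows that $\beta_{i,j}(C)\neq 0$ forces either $\beta_{i,j}(B)\neq 0$ (whence $j-i\le\reg(B)$ and $i\le\pd(B)$) or $\beta_{i-1,j}(A)\neq 0$ (whence $j-i\le\reg(A)-1$ and $i\le\pd(A)+1$). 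Maximizing $j-i$ and $i$ respectively over all $(i,j)$ with $\beta_{i,j}(C)\neq 0$ then yields the two displayed inequalities.

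Finally I would set $A=R/(J\cap K)$, $B=(R/J)\oplus(R/K)$, and $C=R/(J+K)$, and invoke the elementary identities $\reg(B)=\max\{\reg(R/J),\reg(R/K)\}$ and $\pd(B)=\max\{\pd(R/J),\pd(R/K)\}$, which hold because $\operatorname{Tor}$ commutes with direct sums, so that $\beta_{i,j}(B)=\beta_{i,j}(R/J)+\beta_{i,j}(R/K)$. Substituting these into the comparison inequalities produces (i) and (ii) verbatim. I do not expect a genuine obstacle here, since the statement is a formal consequence of the long exact sequence; the only point requiring care is the index shift in the connecting term $\operatorname{Tor}_{i-1}(A,\K)$, which is precisely what produces the $-1$ in the regularity bound and the $+1$ in the projective-dimension bound.
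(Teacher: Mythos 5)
Your proof is correct and complete: the Mayer--Vietoris sequence $0 \to R/(J\cap K) \to (R/J)\oplus(R/K) \to R/(J+K) \to 0$ is exact as you verify, and the $\operatorname{Tor}$ long exact sequence against $\K$ gives exactly the stated index shifts. The paper itself offers no proof of this lemma but cites \cite[Chapter 18]{IPBook}, and your argument is precisely the standard one underlying that citation, so there is nothing to add.
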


\subsection{Bounds on regularity and projective dimension:}\label{induced_matching}
Let $\H$ be a hypergraph. A \textit{matching} in $\H$ is a collection of pairwise disjoint edges of $\H$. More precisely, a subset $\mathcal D\subseteq E(\H)$ is called a matching of $\H$ if for any two distinct edges $\E_1,\E_2\in \mathcal D$, one has $\E_1\cap \E_2=\emptyset$. An \textit{induced matching} is a matching $\mathcal D$ in $\H$ such that the edge set of the induced subhypergraph of $\H$ on the vertices of $\mathcal D$ is precisely the set $\mathcal D$. The following lower bound on the regularity in terms of the induced matching is well-known.

\begin{lemma}\cite[Corollary 3.9]{MoreyVillarreal}\label{reg induced matching}
    Let $\H$ be a hypergraph, and $\mathcal D$ an induced matching of $\H$. Then \[\reg(R/I(\H))\geq \sum_{\E\in \mathcal D}(|\E|-1).\]
\end{lemma}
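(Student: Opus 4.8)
The plan is to isolate the vertices carrying the induced matching, compute the regularity of the restricted ideal exactly, and then transfer that value up to the whole hypergraph by a monotonicity property. Write $\mathcal D=\{\E_1,\dots,\E_s\}$ and let $W=\bigcup_{i=1}^{s}\E_i$, with $R_W=\K[x_j\mid x_j\in W]$ and $\H[W]$ the induced subhypergraph of $\H$ on $W$. The definition of an induced matching says precisely that the only edges of $\H$ contained in $W$ are $\E_1,\dots,\E_s$ themselves; equivalently $I(\H[W])=\l \x_{\E_1},\dots,\x_{\E_s}\r$ inside $R_W$. Since $\mathcal D$ is in particular a matching, the supports $\E_1,\dots,\E_s$ are pairwise disjoint, so this ideal is a sum of principal ideals living in disjoint sets of variables. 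The two facts to establish are then: (a) $\reg(R_W/I(\H[W]))=\sum_{\E\in\mathcal D}(|\E|-1)$, and (b) $\reg(R_W/I(\H[W]))\le\reg(R/I(\H))$.

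For (a), each $\l \x_{\E_i}\r\subseteq\K[\E_i]$ is principal, generated by a squarefree monomial of degree $|\E_i|$, so its Koszul resolution gives $\reg(\K[\E_i]/\l \x_{\E_i}\r)=|\E_i|-1$. Applying \Cref{reg and pd sum}(i) repeatedly to the disjoint-variable decomposition $I(\H[W])=\sum_{i=1}^{s}\l \x_{\E_i}\r R_W$ then yields
\[
\reg(R_W/I(\H[W]))=\sum_{i=1}^{s}\bigl(|\E_i|-1\bigr)=\sum_{\E\in\mathcal D}(|\E|-1),
\]
which is exactly the asserted bound. Chaining this with (b) finishes the proof, so everything reduces to the inequality (b).

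I expect (b)—monotonicity of regularity under passage to an induced subhypergraph—to be the step requiring the most care, since it is where the purely combinatorial hypothesis is converted into homological data. The cleanest route is Hochster's formula: for the Stanley--Reisner (independence) complex $\mathrm{Ind}(\H)$ of $I(\H)$ one has $\beta_{i,j}(R/I(\H))=\sum_{|U|=j}\dim_\K\widetilde H_{j-i-1}(\mathrm{Ind}(\H)[U];\K)$, and for $U\subseteq W$ the restricted complex $\mathrm{Ind}(\H)[U]$ coincides with the corresponding restriction of $\mathrm{Ind}(\H[W])$, because the edges of $\H$ and of $\H[W]$ contained in $U$ are identical. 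Thus the Hochster sum for $R_W/I(\H[W])$ ranges only over the subfamily $U\subseteq W$ with identical summands, giving $\beta_{i,j}(R_W/I(\H[W]))\le\beta_{i,j}(R/I(\H))$ for all $i,j$ and hence $\reg(R_W/I(\H[W]))\le\reg(R/I(\H))$. An alternative, Hochster-free derivation of the same monotonicity deletes the vertices of $V(\H)\setminus W$ one at a time and tracks regularity through the short exact sequences underlying \Cref{regularity and pd lemma}, but the Hochster argument is the most transparent and localizes the entire difficulty of the lemma into this single comparison of Betti numbers.
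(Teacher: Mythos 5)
Your proposal is correct, and it is essentially the argument behind the cited result: the paper itself gives no proof of this lemma, deferring entirely to \cite[Corollary 3.9]{MoreyVillarreal}, whose standard proof is exactly your two steps --- the induced-matching hypothesis makes $I(\H[W])$ a complete intersection of monomials in disjoint variables with regularity $\sum_{\E\in\mathcal D}(|\E|-1)$ (via \Cref{reg and pd sum}), and Hochster's formula gives the Betti-number inequality $\beta_{i,j}(R_W/I(\H[W]))\le\beta_{i,j}(R/I(\H))$, hence monotonicity of regularity under restriction to an induced subhypergraph. No gaps; both steps are carried out correctly, including the key observation that $\mathrm{Ind}(\H)[U]=\mathrm{Ind}(\H[W])[U]$ for $U\subseteq W$.
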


\noindent

Let us define $\nu(\H)=\max\{|\mathcal D|: \mathcal D \text{ is an induced matching of }\H\}$, and call this the \textit{induced matching number} of the hypergraph $\H$. Then for a simple graph $G$, $\nu(G)$ gives a crude lower bound of $\reg(R/I(G))$. For our purpose, given a simple graph $G$, we call an induced matching of the hypergraph $\H(G,t)$ a \textit{$t$-induced matching of $G$}. Observe that $\{\E_1,\ldots,\E_r\}\subseteq E(\H)$ is an induced matching of $\H(G,t)$ if and only if $|\E_i|=t$, $G[\E_i]$ is connected for each $i\in[r]$, $\E_i\cap \E_j=\emptyset$ for all $1\le i<j\le r$, and $E(G[\cup_{i=1}^r\E_i])=\cup_{i=1}^rE(G[\E_i])$. Thus, we have $\nu(\H(G,t))=\nu_t(G)$, where $\nu_t(G)$ is defined as in \Cref{subsection 2.1}. Consequently, in our case, we have the following lower bound for the regularity of $J_t(G)$.
\begin{lemma}\label{lower bound}
    Let $G$ be a finite simple graph. Then $\reg(R/J_t(G))\ge (t-1)\nu_t(G)$.
\end{lemma}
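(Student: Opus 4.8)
The plan is to prove $\reg(R/J_t(G)) \ge (t-1)\nu_t(G)$ by exhibiting a large induced matching in the associated hypergraph $\H(G,t)$ and invoking the general lower bound already established in \Cref{reg induced matching}. The statement is in fact a direct specialization of that lemma, once the combinatorial translation between $t$-induced matchings of $G$ and induced matchings of $\H(G,t)$ is in hand. So the real content is to observe that the two notions coincide and that each edge of $\H(G,t)$ has cardinality exactly $t$.

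First I would take a $t$-induced matching $\U = \{C_1,\ldots,C_r\}$ of $G$ realizing the maximum, so that $r = \nu_t(G)$. By definition each $C_i$ satisfies $|C_i| = t$, each induced subgraph $G[C_i]$ is connected, the $C_i$ are pairwise disjoint, and $E(G[\cup_{i=1}^r C_i]) = \cup_{i=1}^r E(G[C_i])$. These are precisely the four conditions that, as noted in the discussion preceding \Cref{lower bound}, characterize when $\{C_1,\ldots,C_r\}$ forms an induced matching of the hypergraph $\H(G,t)$: connectedness and the size condition say each $C_i$ is an edge of $\H(G,t)$, disjointness says the collection is a matching, and the edge-equality condition says no additional connected $t$-subset appears in the induced subhypergraph on $\cup_i C_i$, i.e.\ the matching is induced. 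Hence $\mathcal D := \{C_1,\ldots,C_r\}$ is an induced matching of $\H(G,t)$, and conversely this identification gives $\nu(\H(G,t)) = \nu_t(G)$.

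Next I would apply \Cref{reg induced matching} to the hypergraph $\H = \H(G,t)$ with the induced matching $\mathcal D$. That lemma yields
\[
\reg(R/I(\H(G,t))) \ge \sum_{\E \in \mathcal D} (|\E| - 1) = \sum_{i=1}^r (t-1) = (t-1)r = (t-1)\nu_t(G),
\]
where I have used that every edge of $\H(G,t)$ has cardinality $t$. Since $J_t(G) = I(\H(G,t))$ by the construction in the preliminaries, this is exactly the desired inequality.

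I do not anticipate any serious obstacle here: the result is essentially a bookkeeping consequence of the definitions together with the cited Morey--Villarreal bound. The only point requiring a little care is the verification that the induced-matching condition for $\H(G,t)$ matches the edge-equality condition $E(G[\cup_i C_i]) = \cup_i E(G[C_i])$ in the definition of $t$-induced matching; but this equivalence has already been spelled out in the paragraph introducing \Cref{lower bound}, so I would simply cite it rather than re-derive it. Thus the proof is short, and the genuine work lies entirely in the definitional translation, which is already available.
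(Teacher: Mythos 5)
Your proposal is correct and matches the paper's own argument exactly: the paper also obtains \Cref{lower bound} by identifying $t$-induced matchings of $G$ with induced matchings of $\H(G,t)$ (so that $\nu(\H(G,t))=\nu_t(G)$) and then invoking the Morey--Villarreal bound of \Cref{reg induced matching}, where each edge has cardinality $t$. Nothing is missing; the paper treats the lemma as an immediate consequence of precisely this translation.
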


Using \cite[Proposition 1.2.13]{BrunsHerzog} and the Auslander-Buchsbaum formula \cite[Theorem 1.3.3]{BrunsHerzog}, one can get an analogous bound for the projective dimension of $R/J_t(G)$ in terms of $\b(J_t(G))$ as follows:

 \begin{lemma}\label{pd and bight}
    Let $G$ be a finite simple graph. Then $\pd(R/J_t(G))\ge \b(J_t(G))$.
\end{lemma}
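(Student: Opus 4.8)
The plan is to prove the lower bound $\pd(R/J_t(G))\ge \b(J_t(G))$ by a standard localization-and-depth argument, using the combinatorial description of associated primes of a square-free monomial ideal as the minimal vertex covers of the associated hypergraph $\H(G,t)$, exactly as the excerpt hints when it cites \cite[Proposition 1.2.13]{BrunsHerzog} and the Auslander--Buchsbaum formula. First I would recall that $\b(J_t(G))$ is, by the discussion preceding the statement, the maximum cardinality of a minimal vertex cover of $\H(G,t)$, or equivalently the maximal height of an associated prime $\p\in\ass(R/J_t(G))$. So it suffices to produce one associated prime $\p$ with $\h(\p)=\b(J_t(G))$ and then bound $\pd(R/J_t(G))$ from below in terms of $\h(\p)$.

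The key steps, in order, would be: (1) Pick a minimal vertex cover $U$ of $\H(G,t)$ with $|U|=\b(J_t(G))$, so that $\p=\langle x_i : x_i\in U\rangle$ is a minimal (hence associated) prime of $J_t(G)$ with $\h(\p)=|U|=\b(J_t(G))$. (2) Localize at $\p$: since $\p$ is an associated prime, $\d(R_\p/(J_t(G))_\p)=0$, and localization does not increase depth in the relevant sense, so the local ring $R_\p/(J_t(G))_\p$ has depth $0$. (3) Apply the Auslander--Buchsbaum formula \cite[Theorem 1.3.3]{BrunsHerzog} over the local ring $R_\p$, namely
\[
\pd_{R_\p}\bigl(R_\p/(J_t(G))_\p\bigr)+\d\bigl(R_\p/(J_t(G))_\p\bigr)=\d(R_\p),
\]
which gives $\pd_{R_\p}(R_\p/(J_t(G))_\p)=\d(R_\p)=\h(\p)=\b(J_t(G))$, using \cite[Proposition 1.2.13]{BrunsHerzog} to identify $\d(R_\p)$ with $\h(\p)$. (4) Conclude with the general inequality $\pd(R/J_t(G))\ge \pd_{R_\p}(R_\p/(J_t(G))_\p)$, since localization can only decrease projective dimension (nonvanishing of a graded Betti number localizes to nonvanishing of a local one). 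Chaining these gives $\pd(R/J_t(G))\ge \b(J_t(G))$.

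The main obstacle, and the step deserving the most care, is step (4) together with the identification in step (2)–(3) that the localized quotient genuinely has depth $0$: one must be careful that $\p$ being an associated prime of $R/J_t(G)$ is exactly what guarantees $\mathfrak p R_\p\in\ass(R_\p/(J_t(G))_\p)$, i.e.\ that the maximal ideal of $R_\p$ is associated to the localized module, which forces its depth to be $0$. The passage $\pd(R/J_t(G))\ge\pd_{R_\p}((R/J_t(G))_\p)$ is the standard fact that projective dimension does not increase under localization, and I would simply invoke it. Because $J_t(G)$ is a graded (square-free monomial) ideal and all associated primes are monomial, these localization arguments are clean and characteristic-free; no delicate resolution computation is needed, which is why this lower bound is stated as a lemma rather than proved in detail.
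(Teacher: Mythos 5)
Your proof is correct, and it reaches the inequality by a mild but genuine variant of the route the paper cites. The paper's one-sentence justification combines \cite[Proposition 1.2.13]{BrunsHerzog} --- the bound $\d(M)\le \dim(R/\p)$ for every $\p\in\ass(M)$ --- with a single, global application of the Auslander--Buchsbaum formula over the polynomial ring: choosing $\p\in\ass(R/J_t(G))$ with $\h(\p)=\b(J_t(G))$ (possible because $J_t(G)$ is radical, so its associated primes are exactly the primes generated by minimal vertex covers of $\H(G,t)$), one gets $\d(R/J_t(G))\le \dim(R/\p)=n-\b(J_t(G))$, whence $\pd(R/J_t(G))=n-\d(R/J_t(G))\ge \b(J_t(G))$. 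You instead localize at such a $\p$, use that associated primes localize to force $\d(R_\p/(J_t(G))_\p)=0$, apply Auslander--Buchsbaum over the regular local ring $R_\p$ to get $\pd_{R_\p}(R_\p/(J_t(G))_\p)=\h(\p)=\b(J_t(G))$, and finish with the fact that projective dimension cannot increase under localization. The two arguments are interchangeable and equally short and characteristic-free; yours trades the one global depth bound for two localization facts, which is a matter of taste rather than of strength. Two small inaccuracies are worth fixing: first, in your step (3) you cite \cite[Proposition 1.2.13]{BrunsHerzog} for the identity $\d(R_\p)=\h(\p)$, but that identity is simply the Cohen--Macaulayness (indeed regularity) of the localized polynomial ring, whereas Proposition 1.2.13 is the global depth bound $\d(M)\le\dim(R/\p)$ that your localization argument is precisely designed to bypass; second, the parenthetical justification in step (4) is loose, since graded Betti numbers need not survive localization --- the correct (and standard) justification for $\pd_{R_\p}((R/J_t(G))_\p)\le \pd_R(R/J_t(G))$ is just that localizing a free resolution of minimal length over $R$ yields a free resolution over $R_\p$ of at most that length.
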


\noindent \textbf{Note:} Let $s$ be the maximum cardinality of the set of vertices in a connected component of a graph $G$. Then $J_t(G)=\l 0\r$ for all $t>s$. Thus, it is enough to focus on non-zero $J_t(G)$, and sometimes we will assume this without mentioning it explicitly. 

\section{Castelnuovo-Mumford Regularity and Linearity}\label{sec reg}

In this section, we compute the regularity of $J_t(G)$ for a chordal graph $G$ in terms of the $t$-induced matching number of $G$. As a consequence, we characterize when such an ideal has a linear resolution. Let us start with the following easy observation.

\begin{lemma}\label{colon comma exchange}
    Let $I\subseteq R$ be a monomial ideal, and let $x_1,\ldots,x_r\in R$ be some indeterminates. Then \[ (I:x_r)+\l x_1,\ldots,x_{r-1} \r=(( I+\l x_1,\ldots,x_{r-1} \r):x_r).\]
\end{lemma}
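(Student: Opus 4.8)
The plan is to prove the two containments separately, writing $J=\l x_1,\ldots,x_{r-1}\r$ for brevity and exploiting throughout that $I$, $J$, and all ideals in sight are monomial. The easy direction is ``$\subseteq$''. Since $I\subseteq I+J$, we immediately get $(I:x_r)\subseteq((I+J):x_r)$. Moreover, because $x_r$ is an indeterminate distinct from $x_1,\ldots,x_{r-1}$, the variable $x_r$ is coprime to every generator of $J$, so $(J:x_r)=J$; combined with $J\subseteq I+J$ this gives $J=(J:x_r)\subseteq((I+J):x_r)$. Adding the two containments yields $(I:x_r)+J\subseteq((I+J):x_r)$, and this half uses nothing special about monomials.

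For the reverse containment ``$\supseteq$'', I would argue on monomials. As $((I+J):x_r)$ is a monomial ideal, it suffices to show that every monomial $m\in((I+J):x_r)$ lies in $(I:x_r)+J$. Fix such an $m$; then $mx_r\in I+J$. Here I would invoke the standard fact that for monomial ideals a monomial belongs to $I+J$ if and only if it belongs to $I$ or to $J$ (since $I+J$ is generated by $\G(I)\cup\G(J)$, and membership of a monomial is detected by divisibility by a single generator). Hence either $mx_r\in I$ or $mx_r\in J$.

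This splits into two cases. If $mx_r\in I$, then by definition $m\in(I:x_r)$, so $m\in(I:x_r)+J$. If instead $mx_r\in J$, then $mx_r$ is divisible by some $x_i$ with $i\in[r-1]$; since $x_i\neq x_r$, the variable $x_i$ already divides $m$, whence $m\in J\subseteq(I:x_r)+J$. In either case $m\in(I:x_r)+J$, which completes the reverse containment and hence the proof.

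The computation is entirely routine; the only point that genuinely uses the hypotheses is the case analysis above, where the distinctness of $x_r$ from $x_1,\ldots,x_{r-1}$ is what lets me pass from $x_i\mid mx_r$ to $x_i\mid m$. I expect this to be the one step worth stating carefully: for an arbitrary pair of ideals, or when colouring by a general element rather than a variable absent from $J$, the containment ``$\supseteq$'' fails, so the monomial hypothesis together with the fact that we colon by a variable not occurring among $x_1,\ldots,x_{r-1}$ are both essential and should not be suppressed.
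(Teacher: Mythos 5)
Your proof is correct. The paper gives no proof of this lemma at all---it is introduced as an ``easy observation''---so there is nothing to compare against; your argument (reduce the reverse inclusion to monomials, split membership of $mx_r$ in $I+\l x_1,\ldots,x_{r-1}\r$ into the cases $mx_r\in I$ and $mx_r\in\l x_1,\ldots,x_{r-1}\r$, and use that $x_r$ is distinct from $x_1,\ldots,x_{r-1}$ to pass from $x_i\mid mx_r$ to $x_i\mid m$) is exactly the standard verification the authors had in mind. Your closing remark that the distinctness of $x_r$ from the other variables is essential is a worthwhile precision: the paper's phrase ``some indeterminates'' leaves this implicit, and the identity genuinely fails without it (e.g.\ $I=\l 0\r$, $r=2$, $x_2=x_1$ gives $\l x_1\r$ on the left but $R$ on the right).
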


\begin{lemma}\label{simplicial connected}
    Let $G$ be a connected graph and $x\in V(G)$ be a simplicial vertex. If $A\subseteq V(G)$ such that $x\in A$, $|A|=t\geq 2$, and $G[A]$ is connected, then $G[A\setminus \{x\}]$ is also connected.
\end{lemma}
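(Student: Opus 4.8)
The plan is to exploit the defining property of a simplicial vertex: the neighbors of $x$ in $G$ form a clique, so any two neighbors of $x$ are adjacent. Write $B = A\setminus\{x\}$; since $|A| = t\geq 2$, the set $B$ is nonempty, and I must show $G[B]$ is connected. Let $N = N_G(x)\cap B$ denote the set of neighbors of $x$ lying in $B$.

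First I would observe that $N\neq\emptyset$: because $G[A]$ is connected and $|A|\geq 2$, the vertex $x$ has at least one neighbor inside $A$, and such a neighbor lies in $B$. Next, since $x$ is simplicial, $N\subseteq N_G(x)$ induces a complete subgraph; in particular $G[N]$ is connected, so all vertices of $N$ belong to a single connected component of $G[B]$.

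The main step---and the crux of the argument---is to show that every connected component of $G[B]$ meets $N$. I would argue by contradiction: suppose some component $C$ of $G[B]$ satisfies $C\cap N=\emptyset$. Then no vertex of $C$ is adjacent to $x$, and $C$ has no edge to $B\setminus C$ (being a component of $G[B]$); hence in $G[A]=G[B\cup\{x\}]$ there is no edge joining $C$ to $A\setminus C$. Since $A\setminus C$ contains $x$ and is therefore nonempty, this would disconnect $G[A]$, contradicting the hypothesis. Thus every component of $G[B]$ contains a vertex of $N$.

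Combining the last two steps finishes the proof: all of $N$ lies in one component, yet every component contains a point of $N$, so $G[B]$ has exactly one component and is connected. I expect the only delicate point to be the contradiction step, where one must check carefully that removing $x$ cannot leave an orphan component---this is precisely where the clique structure of $N_G(x)$ is indispensable, since it forces all potential ``bridges through $x$'' to be already mutually adjacent in $G[B]$. Alternatively, one could reroute any $u$--$v$ path in $G[A]$ that passes through $x$ by replacing a subpath $a,x,b$ with the edge $\{a,b\}$, which exists because $a,b\in N_G(x)$ are adjacent; this gives the same conclusion.
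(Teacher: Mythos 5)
Your proposal is correct and follows essentially the same route as the paper's proof: both use the connectivity of $G[A]$ to show that every component of $G[A\setminus\{x\}]$ contains a neighbor of $x$, and then invoke the clique structure of $N_G(x)$ to force any two such components to be joined by an edge, a contradiction. The paper phrases this more tersely (picking two components $H_1,H_2$ and neighbors $y_1,y_2$ of $x$ in each, then noting $\{y_1,y_2\}\in E(G[A])$), while you spell out the ``every component meets $N$'' step explicitly, but the underlying argument is identical.
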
    
    \begin{proof}
        If possible, let us assume that $G[A\setminus \{x\}]$ is a disconnected graph. Let $H_1$ and $H_2$ be any two connected components of $G[A\setminus \{x\}]$. Since $G[A]$ is connected, there are $y_1\in V(H_1)$ and $y_2\in V(H_2)$ such that $\{x,y_1\},\{x,y_2\}\in E(G[A])$. Then $\{y_1,y_2\}\in E(G[A])$, as $x$ is a simplicial vertex. This is a contradiction to the fact that $H_1$ and $H_2$ are connected components in $G[A\setminus \{x\}]$. 
    \end{proof}

\begin{lemma}\label{induced match lemma}
    Let $G$ be a graph, and $C\subseteq V(G)$ such that $G[C]$ is connected, $|C|=t-1$, and $w\in N_G(C)$. Then \[\nu_t(G\setminus (N_G[C]\cup N_G[w]))\leq \nu_t(G)-1.\]
    \end{lemma}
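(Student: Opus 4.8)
The plan is to prove the equivalent inequality $\nu_t(G)\ge \nu_t(G\setminus(N_G[C]\cup N_G[w]))+1$, which immediately rearranges to the stated bound. Write $G'=G\setminus(N_G[C]\cup N_G[w])$ and let $\U=\{C_1,\ldots,C_r\}$ be a \emph{maximum} $t$-induced matching of $G'$, so that $r=\nu_t(G')$. The idea is that deleting all of $N_G[C]\cup N_G[w]$ completely severs the set $C\cup\{w\}$ from whatever survives in $G'$, so we may freely adjoin $C\cup\{w\}$ to $\U$. Thus I would set $\U'=\U\cup\{C\cup\{w\}\}$ and argue that $\U'$ is a $t$-induced matching of $G$ of cardinality $r+1$, which forces $\nu_t(G)\ge r+1$.

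First I would check the defining conditions of a $t$-induced matching for $\U'$, as in \Cref{subsection 2.1}. Since $w\in N_G(C)$ we have $w\notin C$, so $|C\cup\{w\}|=t$; moreover $G[C]$ is connected and $w$ is adjacent to some vertex of $C$, whence $G[C\cup\{w\}]$ is connected. For pairwise disjointness, note that each $C_i\subseteq V(G')=V(G)\setminus(N_G[C]\cup N_G[w])$, while $C\cup\{w\}\subseteq N_G[C]\cup N_G[w]$, so $C_i\cap(C\cup\{w\})=\emptyset$ for every $i$; disjointness among the $C_i$ is inherited from $\U$. Finally, writing $W=\bigcup_{i=1}^r C_i$, the induced-subgraph identity $E(G[W])=\bigcup_{i=1}^r E(G[C_i])$ passes from $G'$ to $G$ because $W\subseteq V(G')$ and $G'$ is an induced subgraph of $G$, so $G[W]=G'[W]$.

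The crux, and the only step that really uses the removal of $N_G[C]\cup N_G[w]$, is to show there is no edge of $G$ joining $W$ to $C\cup\{w\}$; granting this, the edge set $E(G[W\cup C\cup\{w\}])$ decomposes exactly as $\bigcup_{i=1}^r E(G[C_i])\cup E(G[C\cup\{w\}])$, completing the verification that $\U'$ is $t$-induced. Suppose toward a contradiction that $\{u,v\}\in E(G)$ with $u\in W$ and $v\in C\cup\{w\}$. Then $u\in V(G')$, so $u\notin N_G[C]\cup N_G[w]$. If $v\in C$ this gives $u\in N_G(C)\subseteq N_G[C]$, and if $v=w$ it gives $u\in N_G(w)\subseteq N_G[w]$; either way we contradict $u\notin N_G[C]\cup N_G[w]$. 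Hence no such edge exists, $\U'$ is a $t$-induced matching of $G$ with $|\U'|=r+1$, and therefore $\nu_t(G)\ge \nu_t(G')+1$, as desired. I do not anticipate a genuine obstacle here; the entire argument hinges on the elementary separation observation of this last paragraph, and the remaining bookkeeping is routine.
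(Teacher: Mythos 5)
Your proof is correct and follows essentially the same route as the paper: take a maximum $t$-induced matching of $G\setminus(N_G[C]\cup N_G[w])$ and adjoin $C\cup\{w\}$, using the removal of the closed neighborhoods to guarantee the enlarged family is still induced. The only difference is presentational—you verify explicitly the separation (no edges from $\bigcup_i C_i$ to $C\cup\{w\}$) that the paper summarizes by saying $G[C\cup\{w\}]$ is a connected component of the relevant induced subgraph.
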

    \begin{proof}
        Let $\U$ be a $t$-induced matching of $G\setminus (N_G[C]\cup N_G[w])$ such that 
        $$|\U|=\nu_t(G\setminus (N_G[C]\cup N_G[w])).$$ 
        Since $w\in N_G(C)$ and $G[C]$ is connected, the induced subgraph $G[C\cup\{w\}]$ is connected. Moreover, $G[C\cup\{w\}]$  is a connected component of the the induced subgraph of $G$ on the vertex set $V(G\setminus (N_G[C]\cup N_G[w]))\cup C\cup \{w\}$. Therefore, $\U\cup \{C\cup\{w\}\}$ is a $t$-induced matching of $G$, and hence, the inequality follows. 
    \end{proof}

\noindent \textbf{Notations:} We adopt the following notations for the rest of the paper.

\begin{enumerate}
    \item[(i)] For a hypergraph $\H$ and some given subsets $C_{1},\ldots,C_r$ of the vertex set $V(\H)$, we define a new hypergraph, denoted by $\H\setminus \{C_1,\ldots,C_r\}$, by deleting some particular edges as follows:
    \begin{align*}
        V(\H\setminus \{C_1,\ldots,C_r\})&=V(\H);\\
        E(\H\setminus \{C_1,\ldots,C_r\})&= E(\H)\setminus\{\E\in E(\H)\mid C_i\subseteq \E\text{ for some }1\le i\le r\}.
    \end{align*}
    
\item[(ii)] Let $x$ be a vertex of the graph $G$, and $t\ge 2$ be an integer. Define the set 
$$\A_x:=\{C\subseteq V(G): |C|=t-1,x\in C, G[C] \text{ is connected}\}.$$ Without loss of generality, let $\A_x=\{C_1,\ldots ,C_k\}$. Let us define $\mathcal B_{C_1}:=N_G(C_1)$. Then for each $2\leq i\leq k$, we define 
    \[
    \B_{C_i}:=\{w\in N_G(C_i) \mid C_i\cup \{w\}\neq C_j\cup \{w'\} \text{ for any } w'\in N_G(C_j), \text{ where }1\leq j\leq i-1\}.
    \]
    By construction, $\B_{C_1}\neq\emptyset$ when $J_t(G)\neq \l 0\r$.
\end{enumerate}

The next lemma plays a crucial role in establishing the main results of this article. \medskip

\begin{lemma}\label{main lemma}
    Let $G$ be a chordal graph and $x\in V(G)$ be a simplicial vertex. Let us consider the set $\A_x=\{C_1,\ldots ,C_k\}$. For $1\leq i\leq k$, define 
    \begin{align*}
        J_i&:=\x_{C_i}\langle w\mid w\in \B_{C_i}\rangle,\\
        K_i&:= I(\H(G,t)\setminus \{C_1,\ldots,C_i\}).
    \end{align*}
    If $\B_{C_i}\neq \emptyset$, then 
    \begin{enumerate}
        \item  $J_i+K_i=I(\H(G,t)\setminus \{C_1,\ldots,C_{i-1}\})$,
        \item $J_i\cap K_i=\x_{C_i}L_i$, where $L_i=\left\l \frac{\lcm(m,m')}{\x_{C_i}}\mid m\in J_i\text{ and }m'\in K_i\right\r$. Furthermore, for any $w\in \B_{C_i}$, we have $(L_i:w)= M_i+N_i+Q_i$, where 
        \begin{align*}
            M_i& =\langle v\mid v\in N_G(C_i)\setminus \{w\}\rangle, \\
            N_i&=\langle v\mid v\in N_G(w)\setminus N_G[C_i]\rangle, \\
            Q_i&=\langle \x_{C}\mid C\subseteq V(G), |C|=t, G[C] \text{ is connected }, C\cap (N_G[C_i]\cup N_G[w])=\emptyset \rangle.
        \end{align*}
    \end{enumerate}

    \begin{proof}
        (1) Follows immediately from the construction of the ideals $J_i$ and $K_i$.

        \noindent
        (2) Clearly, we have $J_i\cap K_i=\x_{C_i}L_i$. Now, fix any $1\leq i\leq k$ and any $w\in \B_{C_i}$. We first show that $M_i+N_i+Q_i\subseteq (L_i:w)$. Let $v\in N_G(C_i)\setminus \{w\}$. Then we can write $wv\x_{C_i}=\L(w\x_{C_i}, wv\x_{C_i\setminus \{x\}})$. Since $w,v\in N_G(C_i)$, we see that $G[C_i\cup \{w,v\}]$ is a connected graph. Hence by \Cref{simplicial connected}, $G[(C_i\setminus \{x\})\cup \{w,v\}]$ is also connected. Also, observe that $C_j\nsubseteq (C_i\setminus \{x\})\cup \{w,v\}$ for any $1\leq j\leq i$, as $x\in C_j$ for all such $j$. Hence we get $wv\x_{C_i\setminus \{x\}}\in K_i$. Moreover, from our choice of $w\in \B_{C_i}$, it is easy to see that $w\x_{C_i}\in J_i$. Thus, we have $wv\x_{C_i}\in J_i\cap K_i$, and therefore, $v\in (L_i:w)$. Now, let $v\in N_G(w)\setminus N_G[C_i]$. Then again, we can write $wv\x_{C_i}=\L(w\x_{C_i}, wv\x_{C_i\setminus \{x\}})$. By similar arguments as above, we obtain $w\x_{C_i}\in J_i$ and $wv\x_{C_i\setminus \{x\}}\in K_i$, and thus, $wv\x_{C_i}\in J_i\cap K_i$, which in turn, gives $v\in (L_i:w)$. Finally, let $\x_C\in Q_i$, where $C\subseteq V(G)$ is such that $|C|=t$, $G[C]$ is connected, and $C\cap (N_G[C_i]\cup N_G[w])=\emptyset$. Then we write \[w\x_{C_i}\x_C=\L(w\x_{C_i}, \x_C),\]
        where $w\x_{C_i}\in J_i$ and $\x_C\in K_i$. Thus, $w\x_{C_i}\x_C\in J_i\cap K_i$ and hence $\x_C\in (L_i:w)$.

        We now proceed to show that $(L_i:w)\subseteq M_i+N_i+Q_i$. Let $A\subseteq V(G)$ be such that $\x_A\in \G(K_i)$. We consider the following two cases:
        
        \noindent
        \textbf{Case-I.} Let $A\cap N_G[C_i]=\emptyset$. Then $w\notin A$ and hence $\L(w\x_{C_i},\x_A)=w\x_{C_i}\x_A$. Now if $A\cap N_G[w]\neq \emptyset$, then $A\cap N_G[w]\subseteq N_G(w)\setminus N_G[C_i]$, since $w\notin A$. Also if $A\cap N_G[w]= \emptyset$, then clearly $A\cap (N_G[C_i]\cup N_G[w])=\emptyset$. Thus, in any case, we have 
        \[(\L(w\x_{C_i},\x_A):w\x_{C_i})=\x_A\in N_i+Q_i.\]

        \noindent
        \textbf{Case-II.} Let $A\cap N_G[C_i]\neq \emptyset$. Note that, since $A\nsubseteq C_i$, and $G[C_i\cup A]$ is connected, there exists some $a\in A\setminus C_i$ and $b\in C_i$ such that $\{a,b\}\in E(G)$. Thus $A\cap N_G(C_i)\neq \emptyset$.
        
        \noindent
        First, consider the case when $A\cap N_G(C_i)=\{w\}$. Recall that $|C_i|=t-1$, and $|A|=t$. Since $C_i\nsubseteq A$, there exists some $v\in A\setminus C_i$ such that $v\neq w$ and $v\notin N_G[C_i]$. Since $G[A]$ is connected, there exists a shortest path $v=y_0,y_1,\ldots , y_r=w$ such that $y_i\in A$ for all $0\leq i\leq r$. Note that $y_{r-1}\in N_G(w)$, and $v\notin N_G[C_i]$. So if $y_{r-1}\in C_i$, then there exists $1\leq i\leq r-2$ such that $y_i\in A\cap N_G(C_i)$, which is a contradiction to the fact that $A\cap N_G(C_i)=\{w\}$. Thus $y_{r-1}\notin N_G[C_i]$ and hence $y_{r-1}\in N_G(w)\setminus N_G[C_i]$. Therefore, in this case $(\L(w\x_{C_i},\x_A):w\x_{C_i})\in N_i$. 

        \noindent
        Finally, let us assume that there exists $w'\in A\cap N_G(C_i)$, where $w'\neq w$. Then, we have $(\L(w\x_{C_i},\x_A):w\x_{C_i})\in M_i$. Therefore, $(L_i:w)=M_i+N_i+Q_i$.
    \end{proof}
\end{lemma}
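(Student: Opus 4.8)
The plan is to prove both parts at the level of minimal monomial generators, and to handle the colon computation in (2) by first reducing $(L_i:w)$ to a colon of $K_i$ and then running a case analysis on how a generator of $K_i$ meets the sets $N_G[C_i]$ and $N_G[w]$.

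For (1), I would compare generating sets. A generator of $I(\H(G,t)\setminus\{C_1,\ldots,C_{i-1}\})$ corresponds to a connected $t$-set avoiding each of $C_1,\ldots,C_{i-1}$; those that also avoid $C_i$ are exactly the generators of $K_i$, so the only thing to verify is that the remaining ones---the connected $t$-sets containing $C_i$ but none of $C_1,\ldots,C_{i-1}$---are precisely the generators of $J_i$. Since $|C_i|=t-1$, any connected $t$-set containing $C_i$ has the form $C_i\cup\{w\}$ with $w\in N_G(C_i)$, because connectivity forces the extra vertex to be adjacent to $C_i$. Writing any containment $C_j\subseteq C_i\cup\{w\}$ as an equality $C_j\cup\{w'\}=C_i\cup\{w\}$ with $w'\in N_G(C_j)$ (again by connectivity) shows that avoiding every earlier $C_j$ is exactly the defining condition of $\B_{C_i}$. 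This matches the two generating sets and yields $J_i+K_i=I(\H(G,t)\setminus\{C_1,\ldots,C_{i-1}\})$.

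For (2), the identity $J_i\cap K_i=\x_{C_i}L_i$ is the standard least-common-multiple description of the intersection of two monomial ideals, together with the fact that $\x_{C_i}$ divides every generator of $J_i$. For the colon I would first record the clean reduction $(L_i:w)=(K_i:w\x_{C_i})$: since $w\in\B_{C_i}$ makes $w\x_{C_i}$ a generator of $J_i$, a monomial $u$ lies in $(L_i:w)$ iff $w\x_{C_i}u\in J_i\cap K_i$, and membership in $J_i$ is then automatic. It remains to compute $(K_i:w\x_{C_i})$ generator by generator. For the inclusion $M_i+N_i+Q_i\subseteq(K_i:w\x_{C_i})$: given $v\in N_G(C_i)\setminus\{w\}$ or $v\in N_G(w)\setminus N_G[C_i]$, I would show $wv\x_{C_i}\in K_i$ by exhibiting the witness $wv\x_{C_i\setminus\{x\}}$, which divides $wv\x_{C_i}$, whose support is a connected $t$-set by \Cref{simplicial connected} (this is where the simpliciality of $x$ is essential) and which contains no $C_j$ because it omits $x$ while every $C_j$ contains $x$; for $\x_C\in Q_i$ the disjointness of $C$ from $N_G[C_i]\cup N_G[w]$ gives $w\x_{C_i}\x_C\in K_i$ directly.

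For the reverse inclusion I would take a generator $\x_A$ of $K_i$ and examine $(\x_A:w\x_{C_i})$, splitting on whether $A$ meets $N_G[C_i]$. When $A\cap N_G[C_i]=\emptyset$ the colon is $\x_A$ itself, which lies in $Q_i$ if $A$ also avoids $N_G[w]$ and in $N_i$ otherwise (any vertex of $A\cap N_G[w]$ then lies in $N_G(w)\setminus N_G[C_i]$). When $A\cap N_G[C_i]\neq\emptyset$, connectivity of $G[A\cup C_i]$ together with $C_i\nsubseteq A$ and $A\setminus C_i\neq\emptyset$ forces a crossing edge, so $A\cap N_G(C_i)\neq\emptyset$; a second vertex of $A$ in $N_G(C_i)$ lands the colon in $M_i$. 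The delicate case, and the main obstacle, is $A\cap N_G(C_i)=\{w\}$: here no generator can be read off directly, and one must use connectivity of $G[A]$ to trace a shortest path from a vertex $v\in A\setminus N_G[C_i]$ (which exists since $|A|=t>|C_i|$ and $C_i\nsubseteq A$) to $w$, then argue that its penultimate vertex lies in $N_G(w)\setminus N_G[C_i]$. The subtle point is that if this penultimate vertex were in $N_G[C_i]$ it would produce a second neighbor of $C_i$ inside $A$, contradicting $A\cap N_G(C_i)=\{w\}$; this forces the colon into $N_i$ and completes $(L_i:w)=M_i+N_i+Q_i$.
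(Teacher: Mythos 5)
Your proposal is correct and takes essentially the same route as the paper's own proof: the same witnesses $wv\x_{C_i\setminus\{x\}}$ (justified via \Cref{simplicial connected} and the fact that every $C_j$ with $j\le i$ contains $x$), the same case split on whether $A\cap N_G[C_i]=\emptyset$, and the same shortest-path argument in the delicate subcase $A\cap N_G(C_i)=\{w\}$. The only refinement is your explicit reduction $(L_i:w)=(K_i:w\x_{C_i})$, which the paper leaves implicit (it colons the monomials $\lcm(w\x_{C_i},\x_A)$ by $w\x_{C_i}$ directly) and which has the small added benefit of automatically accounting for the generators of $L_i$ coming from pairs $(w'\x_{C_i},\x_A)$ with $w'\in\B_{C_i}\setminus\{w\}$.
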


In the following example, we illustrate some of the notations used in \Cref{main lemma} with the aid of \Cref{chordal G}.
\begin{example}
    \begin{figure}[h!]
        \centering
         \begin{tikzpicture}
    [scale=1]
\draw [fill] (0,0) circle [radius=0.08];
\draw [fill] (0,2) circle [radius=0.08];
\draw [fill] (2,0) circle [radius=0.08];
\draw [fill] (2,2) circle [radius=0.08];
\draw [fill] (3,1) circle [radius=0.08];
\draw [fill] (4,1) circle [radius=0.08];
\draw [fill] (5,2) circle [radius=0.08];
\draw [fill] (5,0) circle [radius=0.08];
\draw [fill] (6,1) circle [radius=0.08];
\draw [fill] (7,1) circle [radius=0.08];
\draw [fill] (8,1) circle [radius=0.08];
\draw [fill] (9,2) circle [radius=0.08];
\draw [fill] (9,0) circle [radius=0.08];
\draw [fill] (10,1) circle [radius=0.08];
\node at (0,-0.5) {$x_1$};
\node at (0,2.5) {$x_2$};
\node at (2,2.5) {$x_4$};
\node at (2,-0.5) {$x_3$};
\node at (2.5,1) {$x_5$};
\node at (4,1.5) {$x_6$};
\node at (5,2.5) {$x_7$};
\node at (5,-0.5) {$x_8$};
\node at (6,1.5) {$x_9$};
\node at (7,1.5) {$x_{10}$};
\node at (8,1.5) {$x_{11}$};
\node at (9,2.5) {$x_{12}$};
\node at (9,-0.5) {$x_{13}$};
\node at (10,1.5) {$x_{14}$};
\draw (0,0)--(0,2)--(2,2)--(2,0)--(0,0)--(2,2);
\draw (0,2)--(2,0)--(3,1)--(2,2);
\draw (3,1)--(4,1)--(5,2)--(5,0)--(4,1);
\draw (5,2)--(6,1)--(5,0);
\draw (6,1)--(7,1)--(8,1)--(9,2)--(9,0)--(10,1)--(8,1)--(9,0);
\draw (9,2)--(10,1);
\draw (2,2)--(4,1)--(2,0);
\end{tikzpicture}
        \caption{A chordal graph $G$.}
        \label{chordal G}
    \end{figure}
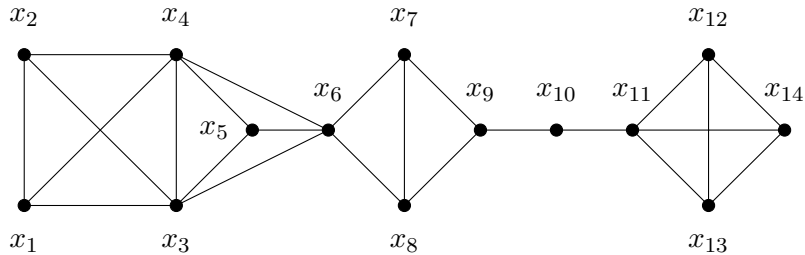
{\rm Let $G$ be the chordal graph as in \Cref{chordal G} and $x_5$ a simplicial vertex in $G$. Consider the ideal $I(\H(G,4))$ inside the polynomial ring $R=\K[x_1,\ldots,x_{14}]$. In this case, observe that $\mathcal{A}_{x_5}=\{C_1,C_2,\ldots,C_9\}$, where 
\begin{align*}C_1=\{x_3,x_4,x_5\},C_2=\{x_3,x_5,x_6\},C_3=\{x_4,x_5,x_6\},&C_4=\{x_2,x_4,x_5\},
    C_5=\{x_1,x_4,x_5\},\\
    C_6=\{x_2,x_3,x_5\}, C_7=\{x_1,x_3,x_5\}, & C_8=\{x_5,x_6,x_7\}, C_9=\{x_5,x_6,x_8\}.
\end{align*}
 It is easy to see that $\B_{C_1}=\{x_1,x_2,x_6\}$. Thus, $J_1=\l x_1x_3x_4x_5,x_2x_3x_4x_5,x_3x_4x_5x_6\r$ and the monomial generators of $K_1$ consist of all monomial generators of $I(\H(G,4))$ excluding the generators of $J_1$. Note that, $x_4\notin\B_{C_2}$ since $\{x_4\}\cup C_2=\{x_6\}\cup C_1$. Consequently, we have $\B_{C_2}=\{x_1,x_2,x_7,x_8\}$. Thus $J_{2}=\l x_1x_3x_5x_6,x_2x_3x_5x_6,x_3x_5x_6x_7,x_3x_5x_6x_8\r$ and the generators of $K_2$ consist of all monomial generators of $I(\H(G,4))$ excluding the monomial generators of $J_1$ and $J_2$. Similarly, one can determine the sets $\B_{C_i}$ and the ideals $J_{C_i}$, for $3\le i\le 9$.}
\end{example}

We are now ready to prove the main theorem of this section.

\begin{theorem}\label{reg_main}
    Let $G$ be a chordal graph. Then for any $t\geq 2$,
    \[\reg(R/J_t(G))=(t-1)\nu_t(G).\]
    \end{theorem}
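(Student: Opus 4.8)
The plan is to prove the two inequalities separately. The lower bound $\reg(R/J_t(G))\ge (t-1)\nu_t(G)$ is exactly \Cref{lower bound}, so the whole content lies in the reverse inequality $\reg(R/J_t(G))\le (t-1)\nu_t(G)$, which I would establish by induction on $|V(G)|$; combined with \Cref{lower bound} this gives the claimed equality. Assuming $J_t(G)\neq\langle 0\rangle$ (otherwise there is nothing to prove) and that $G$ is connected (the disconnected case splits off via \Cref{reg and pd sum}), I would fix a simplicial vertex $x$ of $G$ and use the filtration furnished by \Cref{main lemma}. Writing $K_0:=J_t(G)$ and $K_i:=I(\H(G,t)\setminus\{C_1,\dots,C_i\})$, part (1) of \Cref{main lemma} gives $K_{i-1}=J_i+K_i$ for every $i$ with $\B_{C_i}\neq\emptyset$ (and $K_{i-1}=K_i$ otherwise). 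Since every connected $t$-subset containing $x$ contains some $C_j\in\A_x$ (delete from it a non-cut vertex different from $x$ and invoke \Cref{simplicial connected}), the terminal ideal $K_k$ is precisely the ideal generated by the connected $t$-subsets avoiding $x$, i.e.\ $K_k=J_t(G\setminus x)$; as $x$ is a free variable on $K_k$, the induction hypothesis yields $\reg(R/K_k)=(t-1)\nu_t(G\setminus x)$.

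Next I would telescope. Applying \Cref{regularity and pd lemma intersection}(i) to each splitting $K_{i-1}=J_i+K_i$ and iterating from $i=1$ to $k$ gives
\[
\reg(R/J_t(G)) \le \max\Big\{\max_i \reg(R/J_i),\ \reg(R/K_k),\ \max_i\big(\reg(R/(J_i\cap K_i))-1\big)\Big\}.
\]
Two of the three terms are immediate: $J_i=\x_{C_i}\langle w\mid w\in\B_{C_i}\rangle$ is a degree-$(t-1)$ monomial times an ideal generated by variables, hence has a linear resolution with $\reg(R/J_i)=t-1\le (t-1)\nu_t(G)$; and $\reg(R/K_k)=(t-1)\nu_t(G\setminus x)\le (t-1)\nu_t(G)$, because any $t$-induced matching of $G\setminus x$ is also one of $G$. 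Everything therefore reduces to the single estimate $\reg(R/(J_i\cap K_i))\le (t-1)\nu_t(G)+1$ for each $i$.

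For this estimate I would use part (2) of \Cref{main lemma}, which identifies $J_i\cap K_i=\x_{C_i}L_i$. Since multiplication by the nonzerodivisor $\x_{C_i}$ is a degree-$(t-1)$ shift, $\x_{C_i}L_i\cong L_i(-(t-1))$ as graded modules, so $\reg(R/(J_i\cap K_i))=\reg(R/L_i)+(t-1)$, and it suffices to prove $\reg(R/L_i)\le (t-1)(\nu_t(G)-1)+1$. Choosing any $w\in\B_{C_i}\subseteq N_G(C_i)$, \Cref{main lemma}(2) computes $(L_i:w)=M_i+N_i+Q_i$. The crucial structural observation is that $M_i+N_i$ is generated by variables lying in $N_G[C_i]\cup N_G[w]$, whereas $Q_i=J_t\big(G\setminus(N_G[C_i]\cup N_G[w])\big)$ is generated in the complementary variables; these variable sets are disjoint, so \Cref{reg and pd sum}(i) gives $\reg(R/(L_i:w))=\reg(R/Q_i)$. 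As $G\setminus(N_G[C_i]\cup N_G[w])$ is chordal on fewer vertices, the induction hypothesis gives $\reg(R/Q_i)=(t-1)\nu_t\big(G\setminus(N_G[C_i]\cup N_G[w])\big)$, and since $|C_i|=t-1$ with $w\in N_G(C_i)$, \Cref{induced match lemma} bounds this by $(t-1)(\nu_t(G)-1)$. Hence $\reg(R/(L_i:w))\le (t-1)(\nu_t(G)-1)$.

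Finally, \Cref{regularity and pd lemma}(i) gives $\reg(R/L_i)\le\max\{\reg(R/(L_i:w))+1,\ \reg(R/\langle L_i,w\rangle)\}$; the colon term is already at most $(t-1)(\nu_t(G)-1)+1$, so it remains to control the deletion term $\reg(R/\langle L_i,w\rangle)$. This is the step I expect to be the main obstacle. I would handle it by a secondary downward induction on $|\B_{C_i}|$: passing to $R/\langle w\rangle$ turns $\langle L_i,w\rangle$ into an ideal of the same shape attached to one fewer vertex of $\B_{C_i}$, the disjoint-splitting phenomenon above should persist at each stage, and the process terminates either at the zero ideal or at a connected ideal of a proper induced (chordal) subgraph whose $t$-induced matching number has already dropped by one. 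Verifying that this reduction genuinely preserves the $M+N+Q$ decomposition at every step—so that each intermediate colon ideal keeps regularity at most $(t-1)(\nu_t(G)-1)$—is the delicate bookkeeping that makes the argument work, and is precisely where the combinatorial definition of $\B_{C_i}$ (which prevents double-counting edges already peeled off by earlier $C_j$) becomes indispensable.
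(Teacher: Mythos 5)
Your proposal mirrors the paper's proof in all of its architecture: induction on $|V(G)|$, a simplicial vertex $x$, the filtration $K_{i-1}=J_i+K_i$ from \Cref{main lemma}, telescoping with \Cref{regularity and pd lemma intersection}(i), the identification $K_k=J_t(G\setminus x)$, the bound $\reg(R/J_i)=t-1$, and the reduction of everything to showing $\reg(R/L_i)\le (t-1)(\nu_t(G)-1)+1$, where the colon $(L_i:w)=M_i+N_i+Q_i$ is controlled by \Cref{reg and pd sum}, the induction hypothesis, and \Cref{induced match lemma}. All of that is correct and is exactly what the paper does. The problem is the step you yourself flag as the main obstacle: the deletion term $\reg(R/\langle L_i,w\rangle)$. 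Your plan --- pass to $R/\langle w\rangle$ and assert that $\langle L_i,w\rangle$ becomes ``an ideal of the same shape attached to one fewer vertex of $\B_{C_i}$'' --- is not a proof as it stands. \Cref{main lemma} is proved for a fixed chordal graph $G$ and a fixed enumeration of $\A_x$; after quotienting you are implicitly working with the graph $G\setminus w$, whose list $\A_x$ and whose $\B$-sets are a priori different. To rerun the lemma there you would need to check, at minimum, that the surviving generators of $K_i$ are exactly the generators of $I(\H(G\setminus w,t)\setminus\{\,\cdot\,\})$ for the induced enumeration, and that the $\B$-set of $C_i$ computed in $G\setminus w$ is exactly $\B_{C_i}\setminus\{w\}$. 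These statements happen to be true, so your route could in principle be completed, but none of this verification is carried out or even precisely formulated in the proposal, and it is precisely the content of the step.

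The paper closes this step without ever leaving $R$ or changing the graph, using two facts your sketch does not exploit. First, every generator of $L_i$ has the form $\lcm(w_j\x_{C_i},m')/\x_{C_i}$ and is therefore divisible by $w_j\in\B_{C_i}$ (note $w_j\notin C_i$); hence with $\B_{C_i}=\{w_1,\dots,w_s\}$ one has $L_i+\langle w_1,\dots,w_s\rangle=\langle w_1,\dots,w_s\rangle$, whose quotient has regularity $0$. Second, by \Cref{colon comma exchange} and the inclusion $\{w_1,\dots,w_{j-1}\}\subseteq N_G(C_i)\setminus\{w_j\}\subseteq M_i$, every colon that arises when the variables of $\B_{C_i}$ are adjoined one at a time collapses to a colon already computed by \Cref{main lemma}(2) for the original graph:
\[
\bigl(\bigl(L_i+\langle w_1,\dots,w_{j-1}\rangle\bigr):w_j\bigr)=(L_i:w_j)+\langle w_1,\dots,w_{j-1}\rangle=(L_i:w_j)=M_i+N_i+Q_i,
\]
which you already bounded by $(t-1)(\nu_t(G)-1)$. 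Descending the chain $L_i+\langle w_1,\dots,w_s\rangle,\ L_i+\langle w_1,\dots,w_{s-1}\rangle,\ \dots,\ L_i$ with \Cref{regularity and pd lemma}(i) then gives $\reg(R/L_i)\le (t-1)(\nu_t(G)-1)+1=(t-1)\nu_t(G)-(t-2)$ with no further graph-theoretic input: the ``deletion'' ideal at each stage is simply the previous ideal in the chain, not a new $L$-ideal of a smaller graph. Substituting this peeling argument for your quotient-ring sketch turns your proposal into a complete proof; as written, the key estimate rests on an unverified claim.
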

    \begin{proof}
        In view of \Cref{reg induced matching}, it is enough to prove that $\reg(R/J_t(G))\leq (t-1)\nu_t(G)$. We prove this using induction on $|V(G)|$. First and foremost, if $|V(G)|\leq t$, then one can see that either $J_t(G)=\langle \prod_{x\in V(G)}x\rangle$ or $J_t(G)=\l 0\r$. In either case, it is easy to see that  $\reg(R/J_t(G))=(t-1)\nu_t(G)$. Therefore, we may assume that $|V(G)|\geq t+1$. Also, we can assume that $\nu_t(G)\geq 1$. Moreover, we will write $J_t(G)=I(\H(G,t))$, where $\H(G,t)$ is the hypergraph corresponding to the ideal $J_t(G)$. Now, let $x\in V(G)$ be a simplicial vertex of $G$, and $\A_x=\{C_1,\ldots , C_k\}$. Following \Cref{main lemma}, whenever $\B_{C_i}\neq \emptyset$ for some $i\in\{1,\ldots,k\}$, we denote  
        \begin{align*}
            J_i&=\x_{C_i}\langle w\mid w\in \B_{C_i}\rangle,\\
            K_i&=I(\H(G,t)\setminus \{C_1,\ldots,C_i\}),\\
            J_i&\cap K_i=\x_{C_i}L_i.
        \end{align*}

        \noindent \textbf{Claim:} $\reg(R/L_i)\leq (t-1)\nu_t(G)-(t-2)$ for each $1\leq i\leq k$.

        \noindent
        \textit{Proof of the claim.} Let $\B_{C_i}=\{w_1,\ldots , w_s\}$. Then $L_i+\langle w_1,\ldots , w_s\rangle= \langle w_1,\ldots, w_s\rangle$ by \Cref{main lemma}. Thus, 
        \[\reg(R/(L_i+\langle w_1,\ldots , w_s\rangle))=0\leq (t-1)\nu_t(G)-(t-2).\]
        Now by \Cref{main lemma}, 
        \begin{align*}
            ((L_i+\langle w_1,\ldots , w_{s-1}\rangle):w_s)&=(L_i:w_s)+\langle w_1,\ldots , w_{s-1}\rangle\\
            &= \langle v\mid v\in N_G(C_i)\setminus \{w_s\}\rangle+\langle v\mid v\in N_G(w_s)\setminus N_G[C_i]\rangle \\
            &\hspace{2em}+\langle \x_{C}\mid C\subseteq V(G), |C|=t, G[C] \text{ is connected },\\ &\hspace{2cm} C\cap (N_G[C_i]\cup N_G[w_s])=\emptyset \rangle\\
            &=\langle v\mid v\in N_G(C_i)\setminus \{w_s\}\rangle+\langle v\mid v\in N_G(w_s)\setminus N_G[C_i]\rangle \\
            &\hspace{2em}+ J_t(G\setminus (N_G[C_i]\cup N_G[w_s])).
        \end{align*}
        Then 
        \begin{align*}
            \reg(R/((L_i+\langle w_1,\ldots , w_{s-1}\rangle):w_s))&= \reg(R/J_t(G\setminus (N_G[C_i]\cup N_G[w_s])))\\
            &\leq (t-1)\nu_t(G\setminus (N_G[C_i]\cup N_G[w_s]))\\
            &\leq (t-1)(\nu_t(G)-1),
        \end{align*}
        where the first inequality is by the induction hypothesis and the second inequality follows from \Cref{induced match lemma}. Hence, by \Cref{regularity and pd lemma}, we have $\reg(R/(L_i+\langle w_1,\ldots , w_{s-1}\rangle)\leq (t-1)\nu_t(G)-(t-2).$ Now for each $2\leq j\leq s-1$, similarly using \Cref{main lemma}, we have
        \begin{align*}
            ((L_i+\langle w_1,\ldots , w_{s-j}\rangle):w_{s-j+1})&=(L_i:w_{s-j+1})+\langle w_1,\ldots , w_{s-j}\rangle\\
            &= \langle v\mid v\in N_G(C_i)\setminus \{w_{s-j+1}\}\rangle+\langle v\mid v\in N_G(w_{s-j+1})\setminus N_G[C_i]\rangle \\
            &\hspace{2em}+\langle \x_{C}\mid C\subseteq V(G), |C|=t, G[C] \text{ is connected },\\ &\hspace{2cm} C\cap (N_G[C_i]\cup N_G[w_{s-j+1}])=\emptyset \rangle\\
            &=\langle v\mid v\in N_G(C_i)\setminus \{w_{s-j+1}\}\rangle+\langle v\mid v\in N_G(w_{s-j+1})\setminus N_G[C_i]\rangle \\
            &\hspace{2em}+ J_t(G\setminus (N_G[C_i]\cup N_G[w_{s-j+1}])).
        \end{align*}
        Thus for each $2\leq j\leq s-1$, we get
        \begin{align*}
            \reg(R/((L_i+\langle w_1,\ldots , w_{s-j}\rangle):w_{s-j+1}))&= \reg(R/J_t(G\setminus (N_G[C_i]\cup N_G[w_{s-j+1}])))\\
            &\leq (t-1)\nu_t(G\setminus (N_G[C_i]\cup N_G[w_{s-j+1}]))\\
            &\leq (t-1)(\nu_t(G)-1),
        \end{align*}
        Therefore, repeatedly applying \Cref{regularity and pd lemma}, we obtain 
        $$\reg(R/L_i)\leq (t-1)(\nu_t(G)-1)-1=(t-1)\nu_t(G)-(t-2).$$
        This completes the proof of the claim.\par 
        
        Now, consider the ideal $K_k=I(\H(G,t)\setminus \{C_1,\ldots,C_k\})=I(\H(G\setminus x,t))=J_t(G\setminus x)$.  By the induction hypothesis, $\reg(R/K_k)\leq (t-1)\nu_t(G\setminus x)\leq (t-1)\nu_t(G)$. Also, observe that $\reg(R/J_k)=t-1\leq (t-1)\nu_t(G)$. Moreover, we have 
        $$\reg(R/(J_k\cap K_k))=\reg(R/\x_{C_i}L_k)\leq (t-1)+(t-1)\nu_t(G)-t-2)=(t-1)\nu_t(G)+1.$$ 
        Hence, by \Cref{regularity and pd lemma intersection}, $\reg(R/(J_k+K_k))\leq (t-1)\nu_t(G)$. Note that the ideal $J_k+K_k$ is nothing but $I(\H\setminus \{C_1,\ldots,C_{k-1}\})=K_{k-1}$. We now write $I(\H\setminus \{C_1,\ldots,C_{k-2}\})=J_{k-1}+K_{k-1}$ and continue the above process. Note that if for some $1\leq i\leq k$, $\B_{C_i}=\emptyset$, then $K_i=K_{i-1}$. Hence, after a finite number of steps, we obtain 
        \[\reg(R/J_t(G))=\reg(R/I(\H(G,t)))=\reg(R/(J_1+K_1)\leq (t-1)\nu_t(G),\]
        and this completes the proof.
    \end{proof}

    \begin{remark}\label{reg char}
        Since the regularity of $t$-connected ideals of chordal graphs can be expressed in terms of a combinatorial invariant of the graph, it follows that in this case the regularity is independent of the characteristic of the base field.
    \end{remark}

As an application of \Cref{reg_main}, we get a complete classification of chordal graphs $G$ such that $J_t(G)$ has a linear resolution as follows.

\begin{corollary}\label{cor-linear}
        Let $G$ be a chordal graph and $t\ge 2$ be an integer. Then $J_{t}(G)$ has a linear resolution if and only if $G$ is $t$-gap-free (i.e., $\nu_{t}(G)=1$).
\end{corollary}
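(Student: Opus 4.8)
The plan is to deduce this directly from \Cref{reg_main}, since the linear-resolution property is detected entirely by the regularity of $R/J_t(G)$. First I would observe that, whenever $J_t(G)\neq\langle 0\rangle$, every element of the minimal monomial generating set of $J_t(G)$ is a squarefree monomial $\x_C$ with $|C|=t$ and $G[C]$ connected; hence $J_t(G)$ is generated in the single degree $t$. By the definition of a linear resolution recalled in \Cref{prel}, an ideal generated in degree $t$ has a ($t$-)linear resolution precisely when $\reg(R/J_t(G))=t-1$. Thus the entire statement reduces to comparing $\reg(R/J_t(G))$ with the number $t-1$.

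The key step is then to invoke \Cref{reg_main}, which supplies $\reg(R/J_t(G))=(t-1)\nu_t(G)$. Since we are in the regime $J_t(G)\neq\langle 0\rangle$, there is at least one connected induced subgraph of $G$ on $t$ vertices, so $G$ admits a $t$-induced matching of size one and therefore $\nu_t(G)\geq 1$. Because $t\geq 2$, the multiplier $t-1$ is strictly positive, so the equality $(t-1)\nu_t(G)=t-1$ holds if and only if $\nu_t(G)=1$. Combining the two displayed facts, $J_t(G)$ has a linear resolution if and only if $\nu_t(G)=1$, which is exactly the definition of $G$ being $t$-gap-free.

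The only point requiring any care is the degenerate case $J_t(G)=\langle 0\rangle$, equivalently $\nu_t(G)=0$, which arises when $G$ has no connected induced subgraph on $t$ vertices; following the convention adopted in the Note at the end of \Cref{prel}, I would restrict attention to nonzero $J_t(G)$, and in that regime the argument above is conclusive. I do not anticipate a genuine obstacle: once \Cref{reg_main} is in hand, the corollary is a one-line numerical consequence of the positivity of the factor $t-1$.
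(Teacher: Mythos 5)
Your proposal is correct and follows exactly the route the paper intends: since $J_t(G)$ is generated in the single degree $t$, the linear-resolution property is equivalent to $\reg(R/J_t(G))=t-1$, and \Cref{reg_main} together with the positivity of $t-1$ reduces this to $\nu_t(G)=1$. Your handling of the degenerate case $J_t(G)=\langle 0\rangle$ via the paper's stated convention is also consistent with how the paper treats it.
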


\begin{remark}\normalfont
    The result stated in the above corollary was proved earlier in \cite{AJM2024}. In fact, more generally, it was shown in \cite[Theorem 5.1]{AJM2024} that a chordal graph $G$ is $t$-gap-free if and only if $J_t(G)$ has linear quotients.    
\end{remark}


\begin{example}{\rm
    Let us consider the graph $G$ as shown in \Cref{chordal G}. Then one can deduce that 
    \begin{align*}
        \nu_{t}(G)=\begin{cases}
            4 \quad \text{ for } t=2,\\
            3 \quad \text{ for } t=3,\\
            2 \quad \text{ for } t=4,5,6,\\
            1 \quad \text{ for } t=7,\ldots,14,\\
            0 \quad\text{ for } t>14.
        \end{cases}
    \end{align*}
    Therefore, using \Cref{reg_main}, we can derive $\reg(R/J_t(G))$ for all $t\geq 2$. Note that $J_t(G)=\l 0\r$ for all $t>14$. If $J_t(G)\neq \l 0\r$, then due to \Cref{cor-linear}, $J_t(G)$ has a linear resolution if and only if $t=7,\ldots,14$.

    }
\end{example}

\section{Projective Dimension and Cohen-Macaulay Property}\label{sec pd}

In this section, we compute the projective dimension of $J_t(G)$ for a chordal graph $G$ in terms of the big height of the corresponding ideal. As a corollary, we combinatorially classify when such an ideal is Cohen-Macaulay.

\begin{proposition}\label{lem bght disjoint}
    Let $G$ be a disjoint union of two graphs $G_1$ and $G_2$, i.e., $G=G_1\sqcup G_2$. Then 
$$\b(J_{t}(G))=\b(J_{t}(G_1))+\b(J_{t}(G_2)).$$
\end{proposition}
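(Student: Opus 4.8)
The plan is to reduce the statement to a purely combinatorial fact about minimal vertex covers of the associated hypergraph $\H(G,t)$, using the characterization of $\b(J_t(G))$ recalled in \Cref{prel} as the maximum cardinality of a minimal vertex cover of $\H(G,t)$. First I would observe that since $G=G_1\sqcup G_2$ has no edge joining $V(G_1)$ to $V(G_2)$, any $C\subseteq V(G)$ with $|C|=t$ and $G[C]$ connected must satisfy $C\subseteq V(G_1)$ or $C\subseteq V(G_2)$. Consequently the edge set $E(\H(G,t))$ is the disjoint union $E(\H(G_1,t))\sqcup E(\H(G_2,t))$ over the disjoint vertex sets $V(G_1)$ and $V(G_2)$; equivalently, $J_t(G)=J_t(G_1)R+J_t(G_2)R$ is a sum of ideals in disjoint sets of variables.

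The heart of the argument is the claim that a set $U\subseteq V(G)$ is a minimal vertex cover of $\H(G,t)$ if and only if $U=U_1\cup U_2$ with $U_i$ a minimal vertex cover of $\H(G_i,t)$ for $i=1,2$. For the forward direction, given a minimal cover $U$, I would set $U_i=U\cap V(G_i)$; since every edge of $\H(G_i,t)$ lies inside $V(G_i)$, it is covered by $U_i$, so $U_i$ is a vertex cover of $\H(G_i,t)$, and a removable vertex of $U_i$ would be removable in $U$ as well (the vertex sets being disjoint, deleting it cannot affect coverage of the other component), contradicting minimality of $U$. For the converse, given minimal covers $U_1,U_2$, the union $U_1\cup U_2$ clearly covers every edge; and if $x\in U_i$, then minimality of $U_i$ furnishes an edge $\E\in E(\H(G_i,t))$ with $\E\cap U_i=\{x\}$, whence $\E\cap(U_1\cup U_2)=\{x\}$ because $\E\subseteq V(G_i)$, so $x$ is not removable. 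This yields a bijection between minimal vertex covers of $\H(G,t)$ and pairs of minimal vertex covers of $\H(G_1,t)$ and $\H(G_2,t)$.

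Finally, since $V(G_1)\cap V(G_2)=\emptyset$ we have $|U_1\cup U_2|=|U_1|+|U_2|$, and the two covers may be chosen independently, so maximizing over all minimal vertex covers of $\H(G,t)$ gives
\[
\b(J_t(G))=\max_{U_1}|U_1|+\max_{U_2}|U_2|=\b(J_t(G_1))+\b(J_t(G_2)),
\]
where $U_i$ ranges over the minimal vertex covers of $\H(G_i,t)$. I do not expect a serious obstacle here; the only point requiring care is the minimality half of the vertex-cover decomposition, where the disjointness of $V(G_1)$ and $V(G_2)$ is essential — it is precisely what guarantees that deleting a vertex from one component has no effect on the edges lying in the other.
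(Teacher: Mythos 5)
Your proposal is correct and takes essentially the same route as the paper: the paper observes that $\G(J_t(G))=\G(J_t(G_1))\sqcup\G(J_t(G_2))$ and that consequently the minimal primes of $J_t(G)$ are exactly the sums $\p_1+\p_2$ of minimal primes of $J_t(G_1)$ and $J_t(G_2)$, which, under the dictionary recalled in \Cref{prel} identifying minimal primes with minimal vertex covers of the associated hypergraph, is precisely your decomposition $U=U_1\cup U_2$. The only difference is that you spell out the minimality verification (in vertex-cover language) that the paper's terse proof leaves implicit.
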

\begin{proof}
    Since $G=G_1\sqcup G_2$, we have $\G(J_{t}(G))=\G(J_{t}(G_1))\sqcup \G(J_{t}(G_2))$. Thus, $\mathfrak{p}$ is a minimal prime ideal of $J_t(G)$ if and only if $\p=\p_{1}+\p_{2}$, where $\p_{1}$ and $\p_{2}$ are minimal prime ideals of $J_{t}(G_1)$ and $J_{t}(G_2)$, respectively. Hence, the result follows.
\end{proof}

\begin{proposition}\label{pd induced subgraph}
    For any induced subgraph $H$ of a graph $G$, $\b(J_t(H))\leq \b(J_t(G))$.
\end{proposition}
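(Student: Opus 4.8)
The plan is to reinterpret the inequality purely in terms of vertex covers of the associated hypergraphs, and then to show that a largest minimal vertex cover of $H$ can be extended to a minimal vertex cover of $G$ of at least the same size.

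First I would record the combinatorial meaning of both sides. Writing $W=V(H)$, so that $H=G[W]$, I would observe that a $t$-subset $C\subseteq W$ satisfies ``$H[C]$ is connected'' precisely when ``$G[C]$ is connected'', since $H[C]=G[C]$. Hence the edges of $\H(H,t)$ are exactly the edges of $\H(G,t)$ contained in $W$; in other words $\H(H,t)$ is the induced subhypergraph of $\H(G,t)$ on $W$. As $\b(J_t(H))$ (resp. $\b(J_t(G))$) equals the maximum cardinality of a minimal vertex cover of $\H(H,t)$ (resp. $\H(G,t)$), it suffices to produce, from a maximum minimal vertex cover of $\H(H,t)$, a minimal vertex cover of $\H(G,t)$ of at least that cardinality.

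Next I would fix a minimal vertex cover $U'$ of $\H(H,t)$ with $|U'|=\b(J_t(H))$. By minimality, each $v\in U'$ admits a \emph{private edge}: an edge $\E_v$ of $\H(H,t)$ with $\E_v\cap U'=\{v\}$. The crucial feature is that $\E_v\subseteq W$, so $\E_v$ is disjoint from $V(G)\setminus W$. I would then set $U_0:=U'\cup(V(G)\setminus W)$ (a disjoint union, since $U'\subseteq W$) and check it is a vertex cover of $\H(G,t)$: any edge $\E$ of $\H(G,t)$ either lies inside $W$, in which case it is an edge of $\H(H,t)$ and is met by $U'$, or it contains a vertex outside $W$, which lies in $V(G)\setminus W$.

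Finally I would choose a minimal vertex cover $U\subseteq U_0$ of $\H(G,t)$ (one exists because $U_0$ is a cover) and argue $U'\subseteq U$. This is the only real obstacle, and it is handled by the private edges: for $v\in U'$ we have $\E_v\cap U_0=\{v\}$, since $\E_v\subseteq W$ and $\E_v\cap U'=\{v\}$; thus for \emph{any} vertex cover $U\subseteq U_0$ one must have $\E_v\cap U\neq\emptyset$ while $\E_v\cap U\subseteq\{v\}$, forcing $v\in U$. Consequently every vertex cover of $\H(G,t)$ contained in $U_0$ retains all of $U'$, so in particular $U'\subseteq U$. Therefore $\b(J_t(G))\ge|U|\ge|U'|=\b(J_t(H))$, which is the desired inequality.
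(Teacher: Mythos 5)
Your proof is correct and is essentially the paper's own argument, rewritten through the standard dictionary (recalled in the paper's preliminaries) between minimal primes of a squarefree monomial ideal and minimal vertex covers of its hypergraph: your cover $U_0=U'\cup(V(G)\setminus W)$ is the paper's prime $\mathfrak{q}=\mathfrak{p}+\langle V(G)\setminus V(H)\rangle$, your minimal cover $U\subseteq U_0$ is the paper's minimal prime $\mathfrak{p}'\subseteq\mathfrak{q}$, and your private-edge argument forcing $U'\subseteq U$ is precisely the paper's step showing $\mathfrak{p}\subseteq\mathfrak{p}'$ via, for each variable $x$ of $\mathfrak{p}$, a minimal generator $m$ of $J_t(H)$ divisible by $x$ and by no other variable of $\mathfrak{p}$. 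The approach is the same; only the language (combinatorial versus algebraic) differs.
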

\begin{proof}
    Since $H$ is a subgraph of $G$, any connected set of cardinality $t$ in $H$ is also a connected set in $G$. Thus, we have $J_t(H)\subseteq J_t(G)$. Let $\p$ be an associated prime of $J_t(H)$ such that $\h(\p)=\b(J_t(H))$. Now, let us consider the prime ideal $\mathfrak{q}=\p+\l  V(G)\setminus V(H)\r$. Note that since $H$ is an induced subgraph of $G$, for each $m\in \G(J_t(G))\setminus \G(J_t(H))$, there exists $z\in V(G)\setminus V(H)$ such that $z\mid m$. Thus, $\mathfrak{q}$ is a prime ideal containing $J_t(G)$. Hence, there exists a minimal prime ideal $\p'$ of $J_t(G)$ such that $\p'\subseteq \mathfrak{q}$. Our aim now is to show that $\p\subseteq \p'$. Indeed, if $x\in\G(\p)$, then there exists some $m\in \G(J_t(H))$ such that $x\mid m$ and for each $x'\in\G(\p)$ with $x'\neq x$, we have $x'\nmid m$. Note that for each $y\in \l  V(G)\setminus V(H)\r$, $y\nmid m$. Thus, $m\in J_t(H)\subseteq J_t(G)$ and $\p'\subseteq \q$ implies $x\in \p'$. Hence, $\p\subseteq \p'$ and consequently, $\b(J_t(H))\leq \h(\p')\leq \b(J_t(G))$.
\end{proof}

\begin{proposition}\label{pd J}
    Let $C\subseteq V(G)$ be such that $|C|=t-1$ with $t\geq 2$ and $G[C]$ is connected. Let $J$ denote the ideal $\l w\x_{C}\mid w\in N_G(C)\r$. Then $\pd(R/J)\leq \b(J_t(G))$.
\end{proposition}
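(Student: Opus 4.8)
The plan is to compute $\pd(R/J)$ explicitly and then reduce the inequality to a purely combinatorial statement about vertex covers of $\H(G,t)$; note that chordality plays no role here, so the argument will work for an arbitrary graph $G$.

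First I would record that $J=\x_C\l w\mid w\in N_G(C)\r$, so multiplication by the squarefree monomial $\x_C$ gives a graded isomorphism (up to a degree shift) from $\l w\mid w\in N_G(C)\r$ onto $J$. Since projective dimension is preserved by graded isomorphisms and degree shifts, $\pd(J)=\pd(\l w\mid w\in N_G(C)\r)$, and hence $\pd(R/J)=\pd(R/\l w\mid w\in N_G(C)\r)$; as the members of $N_G(C)$ are distinct variables, the Koszul complex shows this common value equals $|N_G(C)|$. Thus it remains only to prove the combinatorial bound $|N_G(C)|\le \b(J_t(G))$.

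Recalling that $\b(J_t(G))$ is the largest size of a minimal vertex cover of $\H(G,t)$, I would exhibit such a cover containing $N_G(C)$. The crucial observation is a separation lemma: any edge $\mathcal{E}\in E(\H(G,t))$ meeting $C$ must also meet $N_G(C)$. Indeed, $|\mathcal{E}|=t>t-1=|C|$ forces $\mathcal{E}$ to contain a vertex outside $C$, and tracing a path inside the connected graph $G[\mathcal{E}]$ from a vertex of $\mathcal{E}\cap C$ to such a vertex, the first vertex that leaves $C$ lies in $N_G(C)\cap\mathcal{E}$. Consequently, every edge of $\H(G,t)$ disjoint from $N_G(C)$ avoids all of $N_G[C]$ and is therefore an edge of $\H(G\setminus N_G[C],t)$. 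Fixing a minimal vertex cover $U_0$ of $\H(G\setminus N_G[C],t)$ and setting $U:=N_G(C)\sqcup U_0$, the separation lemma guarantees that $U$ covers every edge of $\H(G,t)$, and plainly $|U|\ge|N_G(C)|$.

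The remaining point is that $U$ is minimal, which I would verify with the private-edge criterion: for each $w\in N_G(C)$ the edge $C\cup\{w\}$ meets $U$ only in $w$ (since $U\cap C=\emptyset$), and for each $u\in U_0$ a private edge of $u$ inside $\H(G\setminus N_G[C],t)$ stays private in $\H(G,t)$ because it is disjoint from $N_G(C)$. This produces a minimal vertex cover of size at least $|N_G(C)|$, whence $\b(J_t(G))\ge|N_G(C)|=\pd(R/J)$, as desired. I expect the main obstacle to be precisely this separation lemma and the ensuing minimality check, since the passage from ``cover'' to ``minimal cover'' is exactly where a careless construction could discard vertices of $N_G(C)$; the Koszul computation of $\pd(R/J)$ is routine, and the degenerate cases (namely $N_G(C)=\emptyset$, giving $J=\l 0\r$ and $\pd(R/J)=0$, or $\H(G\setminus N_G[C],t)$ edgeless, giving $U_0=\emptyset$) are immediate.
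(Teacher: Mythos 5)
Your proposal is correct, and the inequality indeed holds for arbitrary graphs, exactly as you note; the difference with the paper lies in how the combinatorial bound is obtained. The first half is identical: the paper also observes that $\pd(R/J)=\pd(R/\l N_G(C)\r)=|N_G(C)|$ via the Koszul complex. For the bound $|N_G(C)|\le \b(J_t(G))$, however, the paper argues non-constructively with minimal primes: since every generator of $J_t(G)$ has degree $t>|C|=t-1$, the prime $\p=\l V(G)\setminus C\r$ contains $J_t(G)$; picking any minimal prime $\q$ of $J_t(G)$ with $\q\subseteq\p$, the generators $w\x_C\in J\subseteq\q$ together with $\G(\q)\cap C=\emptyset$ force $N_G(C)\subseteq\G(\q)$, whence $\b(J_t(G))\ge\h(\q)\ge |N_G(C)|$. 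That argument is a few lines long, uses only the cardinality fact $t>|C|$, and needs no connectivity of the edges at all. You instead exhibit an explicit minimal vertex cover $U=N_G(C)\sqcup U_0$ of $\H(G,t)$, which is what the paper's minimal prime corresponds to under the primes--covers dictionary, but producing it by hand costs you two extra verifications: the separation lemma (which genuinely uses connectivity of the edges of $\H(G,t)$) to see that $U$ is a cover, and the private-edge check for minimality. Both verifications in your write-up are sound, including the point that edges of $\H(G\setminus N_G[C],t)$ remain edges of $\H(G,t)$ and that private edges of vertices in $U_0$ stay private in $U$. What your route buys is a constructive, purely combinatorial proof that in fact anticipates the paper's own later, sharper argument: \Cref{bight lower bound} builds minimal primes of precisely this layered form (neighbors of $C$ plus a minimal prime of $J_t(G\setminus(N_G[C]\cup N_G[y]))$) to get a refined lower bound on $\b(J_t(G))$. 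What the paper's route buys is brevity and softness: no separation lemma, no minimality check, just existence of a minimal prime below $\p$.
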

\begin{proof}
    It is easy to see that $\pd(R/J)=\pd(R/\l N_G(C)\r)$, and the Koszul complex is the minimal free resolution of $R/\l N_G(C)\r$. Thus, $\pd(R/J)=|N_G(C)|$. In order to show $\b(J_t(G))\geq |N_G(C)|$, note that $\p=\l V(G)\setminus C\r$ is a prime ideal containing $J_t(G)$. Then there exists a minimal prime ideal $\q$ of $J_t(G)$ such that $\q\subseteq \p$. Since $J\subseteq J_t(G)\subseteq \q$ and $C\cap \G(\q)=\emptyset$, we must have $\l N_G(C)\r\subseteq \q$. Hence, $\b(J_t(G))\geq \h(\q)\geq |N_G(C)|$ as desired.
\end{proof}

The following lemma on the big height of $J_t(G)$ is important in the proof of the main theorem of this section.

\begin{lemma}\label{bight lower bound}
    Let $x$ be a simplicial vertex of a graph $G$ and $C\subseteq V(G)$ be such that $x\in C$, $\vert C\vert =t-1$ for some integer $t\ge 2$, and $G[C]$ is connected. Then for each $y\in N_G(C)$, we have
    $$\b(J_t(G))\geq \vert N_G(C)\vert +\vert N_{G}(y)\setminus N_G[C]\vert +\b(J_t(G\setminus (N_G[C]\cup N_G[y])).$$
\end{lemma}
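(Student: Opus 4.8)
The plan is to use that $\b(J_t(G))$ equals the maximum cardinality of a minimal vertex cover of the hypergraph $\H(G,t)$, and to exhibit one explicit minimal vertex cover whose size is exactly the right-hand side. Write $G'=G\setminus(N_G[C]\cup N_G[y])$ and fix a minimal vertex cover $W'$ of $\H(G',t)$ with $|W'|=\b(J_t(G'))$. The candidate cover I would take is
\[ U=\{x\}\cup\big(N_G(C)\setminus\{y\}\big)\cup\big(N_G(y)\setminus N_G[C]\big)\cup W', \]
equivalently $U=V(G)\setminus S$ where $S=(C\setminus\{x\})\cup\{y\}\cup\big(V(G')\setminus W'\big)$. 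Using the disjoint partition $V(G)=C\sqcup N_G(C)\sqcup(N_G(y)\setminus N_G[C])\sqcup V(G')$ (recall $y\in N_G(C)\subseteq N_G[C]$ and $V(G')\cap(N_G[C]\cup N_G(y))=\emptyset$), a direct count gives $|U|=|N_G(C)|+|N_G(y)\setminus N_G[C]|+\b(J_t(G'))$, which is precisely the asserted quantity. Since $\b(J_t(G))$ is the maximum size of a minimal vertex cover, it suffices to prove that $U$ is a minimal vertex cover of $\H(G,t)$.

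That $U$ is a vertex cover is equivalent to $S$ being an independent set of $\H(G,t)$, i.e.\ containing no connected $t$-subset. I would deduce this from three facts: the set $T:=(C\setminus\{x\})\cup\{y\}$ has only $t-1$ vertices; neither any vertex of $C$ nor $y$ is adjacent to any vertex of $V(G')$ (their neighbours lie in $N_G[C]\cup N_G(y)$, which is disjoint from $V(G')$); and $V(G')\setminus W'$ contains no edge of $\H(G',t)$, hence no connected $t$-subset of $V(G')$. Because $G[V(G')]=G'$ is induced and there are no edges between $T$ and $V(G')\setminus W'$, every component of $G[S]$ lies inside $T$ or inside $V(G')\setminus W'$; a connected $t$-subset would then sit entirely in $T$ (too small) or in $V(G')\setminus W'$ (none exists), so $S$ is independent.

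The heart of the argument is minimality of $U$, equivalently maximality of $S$: for each $z\in U$ the set $S\cup\{z\}$ must contain a connected $t$-set. There are four cases. For $z=x$ we have $S\cup\{x\}\supseteq C\cup\{y\}$, a connected $t$-set. For $z\in W'$, minimality of $W'$ gives an edge $\E_z$ of $\H(G',t)$ with $\E_z\cap W'=\{z\}$, so $\E_z\subseteq(V(G')\setminus W')\cup\{z\}$. The two remaining cases reduce to two claims: \textbf{(a)} $T$ is connected, and \textbf{(b)} every $w\in N_G(C)\setminus\{y\}$ has a neighbour in $T$. Granting these, $T\cup\{v\}$ is a connected $t$-set for each $v\in N_G(y)\setminus N_G[C]$ (since $v\sim y\in T$), and $T\cup\{w\}$ is a connected $t$-set for each such $w$, which finishes maximality.

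Finally I would establish (a) and (b) using that $x$ is simplicial; this is the step I expect to be the real obstacle, since without it a vertex $w$ meeting $C$ only at $x$ could remain unblocked and $S$ would fail to be maximal. Assume $t\ge 3$ (the case $t=2$, where $C=\{x\}$ and $T=\{y\}$, is checked directly using that $N_G(x)$ is a clique). Since $x$ is simplicial in the connected graph $G[C]$ with $|C|=t-1\ge 2$, \Cref{simplicial connected} gives that $G[C\setminus\{x\}]$ is connected, and $x$ has a neighbour $c'\in C\setminus\{x\}$. For (a): if $y\sim x$ then $y,c'\in N_G(x)$ forces $y\sim c'$, while if $y\not\sim x$ then the neighbour of $y$ in $C$ already lies in $C\setminus\{x\}$; either way $y$ attaches to the connected set $C\setminus\{x\}$, so $T$ is connected. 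For (b): if $w\not\sim x$ then its neighbour in $C$ lies in $C\setminus\{x\}\subseteq T$, while if $w\sim x$ then $w,c'\in N_G(x)$ forces $w\sim c'\in C\setminus\{x\}\subseteq T$; in both cases $w$ has a neighbour in $T$. This verifies (a) and (b), hence the maximality of $S$, and therefore $\b(J_t(G))\ge |U|$, as required.
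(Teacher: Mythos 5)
Your proposal is correct and is essentially the paper's own argument recast in the language of minimal vertex covers rather than minimal primes: your set $U$ is exactly the generating set of the minimal prime $\mathfrak{q}_1=\langle x\rangle+\langle N_G(C)\setminus\{y\}\rangle+\langle N_G(y)\setminus N_G[C]\rangle+\mathfrak{p}$ that the paper constructs in its second case, and both verifications of minimality hinge on \Cref{simplicial connected} applied to $C$ and the clique property of $N_G(x)$. The only divergence is that the paper splits into the cases $N_G(y)\subseteq N_G[C]$ and $N_G(y)\not\subseteq N_G[C]$, using the different cover $N_G(C)\cup\mathcal{G}(\mathfrak{p})$ in the first case, whereas your single construction handles both uniformly --- a mild streamlining, not a different method.
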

   \begin{proof}
	We divide the proof into the following two cases.

    \noindent
    {\bf Case-I.} $N_G(y)\subseteq N_G[C]$. In this case, $|N_G(y)\setminus N_G[C]|=0$. Let $\p$ be a minimal prime ideal of $J_t(G\setminus (N_G[C]\cup N_G[y]))$ such that $\h(\p)=\b(J_t(G\setminus (N_G[C]\cup N_G[y])))$. We claim that $\mathfrak{q}=\p+\l N_G(C)\r$ is a prime ideal containing $J_t(G)$. Indeed, if $f\in\G(J_t(G))$ such that $f\in J_t(G\setminus (N_G[C]\cup N_G[y]))$, then $f\in \p\subseteq \q$. Otherwise, $f\in \l N_G(C)\r$ since $|C|=t-1$, $y\in N_G(C)$, and $N_G(y)\subseteq N_G[C]$. Thus we have $f\in\q$, and consequently, $J_t(G)\subseteq \q$, as required. Next, we want to show that $\q$ is a minimal prime ideal containing $J_t(G)$. Suppose $\mathfrak{q}'$ is a minimal prime ideal of $J_t(G)$ such that $\mathfrak{q}'\subseteq \mathfrak{q}$. Observe that since $\p$ is a minimal prime ideal of $J_t(G\setminus (N_G[C]\cup N_G[y]))$, for each $x\in\G(\p)$ there exists some $g_x\in\G(J_t(G\setminus (N_G[C]\cup N_G[y])))$ such that $x\mid g_x$, and $g_x\notin \p'$, where $\G(\p')=\G(\p)\setminus\{x\}$. By definition, $g_x\in\G(J_t(G))$ and since $\G(\p)\cap N_G(C)=\emptyset$ we must have $x\in\q'$. Thus, $\p\subseteq \q'$. Also, note that $\l N_G(C)\r\subseteq \q'$ since $z\x_C\in J_t(G)$ for all $z\in N_G(C)$ and $(\{z\}\cup C)\cap \G(\p)=\emptyset$. Thus, we have $\mathfrak{q}'=\mathfrak{q}$, and consequently,
	\begin{align*}
		\b(J_t(G))\geq \h(\mathfrak{q})&=\vert N_G(C)\vert+\h(\p)\\
		&=\vert N_G(C)\vert +\vert N_{G}(y)\setminus N_G[C]\vert +\b(J_t(G\setminus (N_G[C]\cup N_G[y])),
	\end{align*}
	where $|N_G(y)\setminus N_G[C]|=0$, in this case.

    \noindent
	 {\bf Case-II.} $N_G(y)\not\subseteq N_G[C]$. Our aim in this case is to show that if $\p$ is a minimal prime ideal of $J_t(G\setminus (N_G[C]\cup N_G[y]))$ such that $\h(\p)=\b(J_t(G\setminus (N_G[C]\cup N_G[y])))$, then 
	\[
	\q_1=\p+\l x\r+\l N_G(C)\setminus\{y\}\r+\l N_G(y)\setminus N_G[C]\r
	\]
	is a minimal prime ideal containing $J_t(G)$. As before, if $f\in\G(J_t(G))$ such that we have $f\in J_t(G\setminus (N_G[C]\cup N_G[y]))$, then $f\in \p\subseteq \q_1$. Otherwise, $f\in \l N_G[C]\cup N_G[y]\r$. Observe that if $f\notin \l (N_G(C)\setminus\{y\})\cup (N_G(y)\setminus N_G[C])\r$, then $f=y\cdot\x_C$ since $|C|=t-1$. Therefore, $f\in \q_1$ since $x\in C$. Thus, $\q_1$ is a prime ideal containing $J_t(G)$. Now, to show $\q_1$ is minimal, we consider some $\q_1'\subseteq\q_1$ such that $\q_1'$ is a minimal prime ideal of $J_t(G)$, and we proceed to show that $\q_1'=\q_1$. Proceeding as in the previous case, it is easy to see that $\p\subseteq \q_1'$ since $\G(\p)\cap ((N_G(C)\setminus\{y\})\cup (N_G(y)\setminus N_G[C])\cup\{x\})=\emptyset$. Now, for each $y'\in N_G(C)$ with $y'\neq y$, we have $yy'\x_{C\setminus\{x\}}\in J_t(G)$ by \Cref{simplicial connected}. Hence, $\l N_G(C)\setminus \{y\}\r\subseteq \q_1'$ since $((C\setminus\{x\})\cup \{y\})\cap \G(\q_1)=\emptyset$. On the other hand, we have $x\in \q_1'$ since $y\x_C\in J_t(G)$. Observe that for each $w\in N_G(y)\setminus N_G[C]$, $G[C\cup\{y,w\}]$ is connected, and thus, $G[C\setminus \{x\}\cup \{y, w\}]$ is connected by \Cref{simplicial connected}. In particular, $yw\x_{C\setminus\{x\}}\in J_t(G)$ for each $w\in N_G(y)\setminus N_G[C]$, which implies $\l N_G(y)\setminus N_G[C]\r\subseteq \q_1'$ since $((C\setminus\{x\})\cup \{y\})\cap \G(\q_1)=\emptyset$. Hence, $\q_1'=\q_1$, and consequently, 
	\begin{align*}
		\b(J_t(G))&\geq \h(\mathfrak{q}_1)\\
		&=\vert N_G(C)\vert +\vert N_{G}(y)\setminus N_G[C]\vert +\b(J_t(G\setminus (N_G[C]\cup N_G[y])).
	\end{align*}
	This completes the proof. 
\end{proof}

We now proceed to prove the main theorem of this section.

\begin{theorem}\label{pd_main}
    Let $G$ be a chordal graph. Then for all $t\ge 2$, $\pd(R/J_t(G))=\b(J_t(G))$.
\end{theorem}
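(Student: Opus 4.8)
The plan is to prove $\pd(R/J_t(G)) = \b(J_t(G))$ by combining the general lower bound $\pd(R/J_t(G)) \geq \b(J_t(G))$ from \Cref{pd and bight} with a matching upper bound established by induction on $|V(G)|$. For the upper bound, I would mirror the structure of the proof of \Cref{reg_main} as closely as possible, working with the same simplicial vertex $x$, the same collection $\A_x = \{C_1, \ldots, C_k\}$, and the same ideals $J_i$, $K_i$, and $J_i \cap K_i = \x_{C_i} L_i$ from \Cref{main lemma}. The base case ($|V(G)| \leq t$) is immediate since $J_t(G)$ is either principal or zero, so both sides agree. For the inductive step with $|V(G)| \geq t+1$, the key inequality to control is the projective-dimension analogue of \Cref{regularity and pd lemma intersection}(ii), namely $\pd(R/(J_i+K_i)) \leq \max\{\pd(R/J_i), \pd(R/K_i), \pd(R/(J_i\cap K_i))+1\}$.

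The heart of the argument is bounding each of the three quantities on the right by $\b(J_t(G))$. First, $\pd(R/J_i) = |N_G(C_i)|$ follows exactly as in \Cref{pd J}, and \Cref{pd J} itself gives $|N_G(C_i)| \leq \b(J_t(G))$. Second, $\pd(R/K_k) = \pd(R/J_t(G\setminus x))$, which by induction equals $\b(J_t(G\setminus x))$; since $G\setminus x$ is an induced subgraph of $G$, \Cref{pd induced subgraph} yields $\b(J_t(G\setminus x)) \leq \b(J_t(G))$. The crucial and most delicate piece is bounding $\pd(R/(J_i \cap K_i)) = \pd(R/\x_{C_i}L_i)$. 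Since $\x_{C_i}L_i$ and $L_i$ differ only by the squarefree monomial factor $\x_{C_i}$ (whose variables are disjoint from the relevant action), I expect $\pd(R/\x_{C_i}L_i) = \pd(R/L_i)$, so it suffices to bound $\pd(R/L_i)$. Here I would run an inner induction over the variables $w_1, \ldots, w_s$ of $\B_{C_i}$ using \Cref{regularity and pd lemma}(ii), exactly parallel to the \textbf{Claim} inside the proof of \Cref{reg_main}. Using the colon computation $(L_i : w) = M_i + N_i + Q_i$ from \Cref{main lemma}(2), the ideal $(L_i + \langle w_1, \ldots, w_{s-j}\rangle : w_{s-j+1})$ decomposes as $\langle N_G(C_i)\setminus\{w_{s-j+1}\}\rangle + \langle N_G(w_{s-j+1})\setminus N_G[C_i]\rangle + J_t(G \setminus (N_G[C_i] \cup N_G[w_{s-j+1}]))$. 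By \Cref{reg and pd sum}(ii) this projective dimension equals $|N_G(C_i)\setminus\{w_{s-j+1}\}| + |N_G(w_{s-j+1})\setminus N_G[C_i]| + \pd(R/J_t(G\setminus(N_G[C_i]\cup N_G[w_{s-j+1}])))$, and the last term equals the corresponding big height by induction.

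At this point the combinatorial content of \Cref{bight lower bound} becomes decisive: that lemma (with $y = w_{s-j+1}$ and $C = C_i$) gives exactly
\[
\b(J_t(G)) \geq |N_G(C_i)| + |N_G(w_{s-j+1})\setminus N_G[C_i]| + \b(J_t(G\setminus(N_G[C_i]\cup N_G[w_{s-j+1}]))).
\]
Comparing this with the expression above (noting $|N_G(C_i)\setminus\{w_{s-j+1}\}| = |N_G(C_i)| - 1$) shows each colon contributes at most $\b(J_t(G)) - 1$, and adding $1$ at the final step of the inner induction keeps everything at or below $\b(J_t(G))$. I would then feed these bounds back through \Cref{regularity and pd lemma intersection}(ii): since $\pd(R/(J_i\cap K_i)) + 1 \leq \b(J_t(G))$, $\pd(R/J_i) \leq \b(J_t(G))$, and $\pd(R/K_i) \leq \b(J_t(G))$, peeling off $C_k, C_{k-1}, \ldots, C_1$ one at a time (skipping indices where $\B_{C_i} = \emptyset$, so $K_i = K_{i-1}$) eventually gives $\pd(R/J_t(G)) \leq \b(J_t(G))$.

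The main obstacle I anticipate is the bookkeeping in the inner induction on $\B_{C_i}$: one must verify that the ``$+1$'' incurred each time \Cref{regularity and pd lemma intersection}(ii) (or \Cref{regularity and pd lemma}(ii) in the linear-quotient-style step) is applied does not accumulate beyond a single unit, and that \Cref{bight lower bound} supplies precisely the slack needed to absorb it. A subtle point is ensuring that $\pd(R/\x_{C_i}L_i) = \pd(R/L_i)$ genuinely holds despite $\x_{C_i}$ sharing variables with generators of $L_i$; one should argue via \Cref{reg and pd sum} or a direct polarization/variable-splitting observation that multiplication by the monomial $\x_{C_i}$ does not change the projective dimension. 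Everything else runs in lockstep with the regularity proof, with \Cref{bight lower bound} playing the role that \Cref{induced match lemma} played there.
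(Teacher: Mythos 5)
Your overall architecture --- the lower bound from \Cref{pd and bight}, induction on $|V(G)|$, the decomposition into $J_i$ and $K_i$ from \Cref{main lemma}, the inner induction on $\B_{C_i}$ via \Cref{regularity and pd lemma}(ii), and \Cref{bight lower bound} as the source of slack --- is exactly the machinery of the paper's Subcase-I(B), and within that regime your argument is sound (including the point you flag about $\pd(R/\x_{C_i}L_i)=\pd(R/L_i)$, which is harmless since $\x_{C_i}L_i\cong L_i$ as $R$-modules). The genuine gap is that your inner induction does not terminate where you need it to. After all of $\B_{C_i}=\{w_1,\ldots,w_s\}$ has been adjoined, the ``comma'' branch of \Cref{regularity and pd lemma}(ii) is $L_i+\l w_1,\ldots,w_s\r=\l w_1,\ldots,w_s\r$, whose projective dimension is exactly $s=|\B_{C_i}|$; since the lemma takes a maximum, your method certifies $\pd(R/L_i)\le\b(J_t(G))-1$ only if, in addition to the colon bounds, one knows $s\le\b(J_t(G))-1$. \Cref{bight lower bound} supplies that extra unit only when, for some $y\in N_G(C_i)$, either $N_G(y)\setminus N_G[C_i]\neq\emptyset$ or $\b(J_t(G\setminus(N_G[C_i]\cup N_G[y])))>0$ --- i.e., only when $G[N_G[C_i]]$ is not an entire connected component of $G$ standing alone. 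When $N_G[C_i]=V(G)$ this fails outright: take $G=K_4$, $t=3$, $x=x_1$, $C_1=\{x_1,x_2\}$. Then $\B_{C_1}=N_G(C_1)=\{x_3,x_4\}$, so the terminal ideal $\l x_3,x_4\r$ has projective dimension $2=\b(J_3(K_4))$; your inner induction therefore certifies only $\pd(R/L_1)\le 2$, and feeding $\pd(R/(J_1\cap K_1))+1\le 3$ into \Cref{regularity and pd lemma intersection}(ii) yields $\pd(R/J_3(K_4))\le 3$, one more than the theorem asserts.

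This is precisely why the paper's proof is not a verbatim transcription of the regularity proof but splits into cases. In Subcase-I(A) (some $G[N_G[C]]$ is a proper connected component of $G$) it sidesteps the decomposition entirely, writing $J_t(G)$ as a sum of ideals in disjoint sets of variables and invoking \Cref{reg and pd sum} together with \Cref{lem bght disjoint}. In Case-II ($N_G[C_i]=V(G)$ for some $i$) it abandons the inner induction for those $C_i$ and instead computes $J_i\cap K_i$ explicitly (Claim 2): it equals $\x_{C_i}$ times the edge ideal of a graph $G'$ on the vertex set $N_G(C_i)$ whose complement is disconnected, so by a result of Jacques one gets the exact value $\pd(R/(J_i\cap K_i))=|N_G(C_i)|-1$, which is $\le\b(J_t(G))-1$ by \Cref{pd J}. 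Your proposal needs to be supplemented by arguments for these two configurations; as written, the slack you attribute to \Cref{bight lower bound} simply is not there in those cases. (A small additional slip: $\pd(R/J_i)=|\B_{C_i}|$, not $|N_G(C_i)|$, since $J_i$ is generated by $\x_{C_i}w$ for $w\in\B_{C_i}$ only; the inequality $\pd(R/J_i)\le\b(J_t(G))$ that you actually need still holds.)
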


\begin{proof}
    Due to \Cref{pd and bight}, it is enough to show that $\pd(R/J_t(G))\leq \b(J_t(G))$. We proceed by induction on $|V(G)|$. The statement is trivially true for $|V(G)|< t$. If $|V(G)|=t$, then either $J_t(G)=\l 0\r$ or $J_t(G)=\l \x_{V(G)}\r$ depending on whether $G$ is connected or not. In both cases, it is easy to see that $\pd(R/J_t(G))=\b(J_t(G))$. Therefore, we may assume that $|V(G)|>t$. Note that, in case $G$ is disconnected, and $G_1,\ldots,G_r$ are all connected components of $G$ such that $|V(G_i)|< t$ for all $i\in[r]$, then again $J_t(G)=\l 0\r$, and thus $\pd(R/J_t(G))=\b(J_t(G))$. Therefore, we may further assume that $G$ has at least one connected component with at least $t$ many vertices. Now, let us consider the following two cases:

    \noindent
    \textbf{Case-I.} For each simplicial vertex $x$ of $G$, if $C\in \A_x$, then $V(G)\neq N_G[C]$. Note that since $G$ has at least one connected component with at least $t$ many vertices, we can find a simplicial vertex $x$ of $G$ and some $C\in \A_x$ such that $C\subsetneq N_G[C]$. Based on this observation, we consider two subcases:

    \noindent
    \textbf{Subcase-I(A).} There exists a simplicial vertex $x$ of $G$ and $C\in \A_x$ such that $G[N_G[C]]$ forms a connected component of $G$. Then we have 
\[
J_t(G)=J_t(G[N_G[C]])+J_t(G\setminus N_G[C]).
\]
    From our assumption in Case-I, it follows that 
    $$1\le |N_G[C]|<|V(G)| \text{ and } 1\le |V(G\setminus N_G[C])|<|V(G)|.$$ 
    Therefore, by the induction hypothesis, \Cref{reg and pd sum} and \Cref{lem bght disjoint}, we have 
    \begin{align*} \pd(R/J_t(G))&=\pd(R/J_t(G[N_G[C]]))+\pd(R/J_t(G\setminus N_G[C]))\\
        &\leq \b(J_t(G[N_G[C]]))+\b(J_t(G\setminus N_G[C]))\\
        &=\b(J_t(G)).
    \end{align*}
    \textbf{Subcase-I(B).} For each simplicial vertex $x$ of $G$ and each $C\in \A_x$, $G[N_G[C]]$ does not form a connected component of $G$. In particular, $C\subsetneq N_G[C]$ for any such $C$. Recall that $J_t(G)=I(\H(G,t))$. Now, fix a simplicial vertex $x$ of $G$. Let $\A_x=\{C_1,\ldots,C_k\}$. For $1\leq i\leq k$, we define
    \begin{align*}
         J_{i}= \l\x_{C_i} w\mid w\in \B_{C_i}\r\text{ and }
        K_i =I(\H(G,t)\setminus \{C_1,\ldots,C_i\}).
    \end{align*}
    Note that by construction, $\B_{C_1}\neq \emptyset$ as $C_1\subsetneq N_G[C_1]$. Fix some $i\in\{1,\ldots,k\}$ so that $\B_{C_i}\neq \phi$. Then we are in the situation of \Cref{main lemma}, and thus, $J_i+K_i=I(\H(G,t)\setminus \{C_1,\ldots,C_{i-1}\})$. Now, we proceed to prove the following claim:
    \medskip
    
    \noindent\textbf{Claim 1:} $\pd(R/J_i\cap K_i)\leq \b(J_t(G))-1$.
    \medskip
    
    \noindent\textit{Proof of the Claim 1.} We have $J_i\cap K_i=\x_{C_i}L_i$, where $L_i$ is generated by the monomials $\frac{\lcm(m,m')}{\x_{C_i}}$ with $m\in J_i$ and $m'\in K_i$. Thus, it is enough to prove $\pd(R/L_i)\leq \b(J_t(G))-1$. Let $\B_{C_i}=\{w_1,\ldots,w_s\}$. Then by \Cref{main lemma}, $L_i+\l w_1,\ldots,w_s\r=\l w_1,\ldots,w_s\r$, and hence, $\pd(R/L_i+\l w_1,\cdots, w_s\r)=s$. Note that $\B_{C_i}\subseteq N_G(C_i)$. Moreover, since $G[N_G[C_i]]$ does not form a connected component of $G$, there exists some $y\in N_G(C_i)$ such that $N_G(y)\setminus N_G[C_i]\neq \emptyset$. In this case, by \Cref{bight lower bound}, $\b(J_t(G))\ge |N_G(C_i)|+|N_G(y)\setminus N_G[C_i]|\ge s+1$. Thus, 
    \begin{align}\label{comma all variable}
       \pd(R/L_i+\l w_1,\dots, w_s\r)\le \b(J_t(G))-1. 
    \end{align}
    Now by \Cref{main lemma}, 
    \[(L_i:w_j)=\l N_G(C_i)\setminus \{w_j\}\r+\l N_G(w_j)\setminus N_G[C_i]\r+J_t(G\setminus (N_G[C_i]\cup N_G[w_j]))\]
    for each $j\in [s]$. Thus, from \Cref{reg and pd sum}(ii) it follows that
    \begin{align*}
        \pd(R/(L_i:w_j))=\, & \pd(\mathbb K[N_{G}[C_i]]/\l N_G(C_i)\setminus \{w_j\}\r)\\ &+ \pd(\mathbb K[N_{G}(w_j)\setminus N_{G}[C_i]]/\l N_G(w_j)\setminus N_G[C_i]\r)\\
        & + \pd(\mathbb K[V(G)\setminus (N_G[C_i]\cup N_G[w_j])]/J_t(G\setminus (N_G[C_i]\cup N_G[w_j]))),
    \end{align*}
    where for a subset $A$ of variables, we denote the polynomial ring $\mathbb K[x_i\mid x_i\in A]$ as $\mathbb K[A]$.
    Note that if $I\subseteq R$ is an ideal generated by $k$ indeterminates, then $\pd(R/I)=k$ (this simply follows from the observation that $\pd(R/\l x_i\r)=1$, where $x_i$ is an indeterminate, and then using \Cref{reg and pd sum} (ii), repeatedly). Thus, from the above equation and induction hypothesis, we have 
    \begin{align*}
        \pd(R/(L_i:w_j))\leq \vert N_G(C_i)\vert -1+ \vert N_G(w_j)\setminus N_G[C_i]\vert+ \b (J_t(G\setminus (N_G[C_i]\cup N_G[w_j]))).
    \end{align*}
    Finally, due to \Cref{bight lower bound}, we obtain 
    \begin{align}\label{colon Li}
        \pd(R/(L_i:w_j))\le \b(J_t(G))-1,
    \end{align} for each $j\in [s]$. Observe that $((L_i+\l w_1,\ldots,w_{s-1}\r):w_s)=(L_i:w_s)$ by \Cref{colon comma exchange} and \Cref{main lemma}. Thus, using \Cref{colon Li}, we get $$\pd(R/((L_i+\l w_1,\ldots,w_{s-1}\r):w_s))\le \b(J_t(G))-1.$$
    Consequently, by \Cref{regularity and pd lemma} and the \Cref{comma all variable}, we obtain 
    $$\pd(R/L_i+\l w_1,\ldots,w_{s-1}\r \le \b(J_t(G))-1.$$
    In view of \Cref{colon comma exchange} and \ref{main lemma}, we again have $((L_i+\l w_1,\ldots,w_{s-2}\r):w_{s-1})=(L_i:w_{s-1})$. Thus, proceeding similarly as before and using \Cref{regularity and pd lemma} repeatedly, we finally obtain $\pd(R/L_i)\le \b(J_t(G))-1$, and this completes the proof of the above claim.\par 

    It is easy to observe that $J_t(G\setminus x)=I(\H(G,t)\setminus \{C_1,\ldots,C_k\})=K_k$. Since $G\setminus x$ is an induced subgraph of $G$, by \Cref{pd induced subgraph} and the induction hypothesis, we have $\pd(R/K_k)\le \b(J_t(G))$. Moreover, $\pd(R/J_k)=\pd(R/\l \B_{C_k}\r)=|\B_{C_k}|\le |N_G(C_k)|$, and proceeding as in the proof of \Cref{pd J}, we have $\pd(R/J_k)\le \b(J_t(G))$. Thus, using the Claim 1 and by \Cref{regularity and pd lemma intersection}, we obtain 
    \[
    \pd(R/J_k+K_k)\le \b(J_t(G)),
    \]
    where the ideal $J_k+K_k=I(\H(G,t)\setminus \{C_1,\ldots,C_{k-1}\})=K_{k-1}$, by \Cref{main lemma}. Next, using \Cref{main lemma}, we write $K_{k-2}=J_{k-1}+K_{k-1}$ and continue the above process. Note that $\B_{C_1}\neq \emptyset$, and if for some $i\in \{2,\ldots,k\}$, $\B_{C_i}=\emptyset$, then $K_i=K_{i-1}$. Thus using \Cref{main lemma} and \Cref{regularity and pd lemma intersection} repeatedly, we get $\pd(R/J_i+K_i)\le \b(J_t(G))$ for each $i\in [k]$. In particular, $\pd(R/J_t(G))=\pd(R/J_1+K_1)\le \b(J_t(G))$, as desired.

     \medskip
     \noindent
    \textbf{Case-II.} There exists a simplicial vertex $x$ of $G$, and some $C_1\in \A_x$ so that $V(G)=N_G[C_1]$. In particular, $G$ is a connected graph. As before, since $G$ has at least one connected component with at least $t$ many vertices, we may as well assume that $C_1\subsetneq N_G[C_1]$. Without loss of generality, let $\A_x=\{C_1,\ldots,C_k\}$, where $N_G[C_i]=V(G)$ for $1\le i\le l$, and $N_G[C_i]\subsetneq V(G)$ for $l+1\le i\le k$. As before, define 
    \[
    J_{i}= \l\x_{C_i} w\mid w\in \B_{C_i}\r\text{ and } K_i =I(\H(G,t)\setminus \{C_1,\ldots,C_i\})
    \]
    for $i\in [k]$. Fix some $i\in [k]$ such that $\B_{C_i}\neq \emptyset$. First, we consider the case when $l+1\le i\le k$. In this case, since $G$ is connected and $N_G[C_i]\subsetneq V(G)$, there exists some $y\in N_G(C_i)$ such that $N_G(y)\setminus N_G[C_i]\neq\emptyset$. In particular, $G[N_G[C_i]]$ does not form a connected component of $G$. Thus proceeding as in Subcase-I(B), we obtain 
    $$\pd(R/J_i\cap K_i)\le \b(J_t(G))-1.$$
    Now consider the case when $i\in [l]$. Let us take
    $$\B_{C_i}=\{w_1,\ldots,w_r\} \text{ and } N_G(C_i)=\{w_1,\ldots,w_r,w_{r+1},\ldots,w_s\}$$ for some $s\ge r$. Then $J_i=\x_{C_i}\l w_1,\ldots,w_r\r$.

        \medskip
    
    \noindent\textbf{Claim 2:} For each $i\in [l]$, if $\B_{C_i}\neq\emptyset$, then $\pd(R/J_i\cap K_i)\le\b(J_t(G))-1$.
    \medskip

        \noindent\textit{Proof of the Claim 2.} We first aim to show that  
        \begin{align}\label{intersection expression}
           J_i\cap K_i=\x_{C_i}\l w_mw_n\mid 1\le m<n\le r\r+\x_{C_i}\l w_mw_n\mid m\in [r],r+1\le n\le s\r. 
        \end{align}
        Indeed, by \Cref{simplicial connected}, for each $1\le m<n\le r$, $G[(C_i\setminus \{x\})\cup\{w_m,w_n\}]$ is a connected subgraph of $G$. Thus $\x_{C_i}w_mw_n=\lcm(w_m\x_{C_i},w_mw_n\x_{C_i\setminus\{x\})}$, where $w_m\x_{C_i}\in J_i$, and $w_mw_n\x_{C_i\setminus\{x\}}\in K_i$. Similarly, if $m\in[r]$ and $r+1\le n\le s$, then by \Cref{simplicial connected}, $G[(C_i\setminus \{x\})\cup\{w_m,w_n\}]$ is also connected. Thus, if $M$ denotes the right-hand side of \Cref{intersection expression}, then $M\subseteq J_i\cap K_i$. Conversely, if $A\subseteq V(G)$ such that $C_i\nsubseteq A$, $|A|=t$, and $G[A]$ is connected, then $\{w_m,w_n\}\subseteq A$ for some $w_m,w_n\in N_G(C_i)$. Thus $J_i\cap K_i\subseteq M$ and consequently, 
        \[\pd(R/J_i\cap K_i)=\pd(R/I(G')),\] where $I(G')$ is the edge ideal of the graph $G'$ with
        \begin{align*}
          V(G')&=N_G(C_i),\\  
          E(G')&=\{\{w_m,w_n\},\{w_p,w_q\}\mid 1\le m<n\le r,p\in[r],r+1\le q\le s\}.
        \end{align*}
         It is easy to observe that the complement of $G'$ is a disconnected graph, and thus, by using \cite[Theorem 4.2.6]{JacquesThesis} we have $\pd(R/J_i\cap K_i)=|N_G(C_i)|-1$. This completes the proof of Claim 2 since $\b(J_t(G))\ge |N_G(C_i)|$, by \Cref{pd J}.

        Thus, for each $i\in [k]$ we observe that if $\B_{C_i}\neq \emptyset$, then $\pd(R/J_i\cap K_i)\le \b(J_t(G))-1$. Hence, we are in the same situation as in Subcase-I(B). Proceeding as before, we see that $\pd(R/J_i+K_i)\le \b(J_t(G))$ for each $i\in [k]$, and in particular, 
        $$\pd(R/J_t(G))=\pd(R/J_1+K_1)\le \b(J_t(G)).$$ 
        This completes the proof of the Theorem.
\end{proof}

    \begin{remark}\normalfont\label{pd char}
        Note that $\b(J_t(G))$ is the same as the maximum cardinality of a minimal vertex cover of $\mathcal{H}(G,t)$. Thus, by \Cref{pd_main}, since the projective dimension of $t$-connected ideals of chordal graphs can be expressed in terms of some combinatorial invariant of the graph, it follows that in this case the projective dimension is independent of the characteristic of the base field.
    \end{remark}

\begin{example}{\rm
Let $G$ be the graph as in \Cref{chordal G}. Note that $\{x_4,x_5,x_6,x_7,x_8,x_{12},x_{13},x_{14}\},$ is a vertex cover of $\H(G,4)$ with maximum possible cardinality. Thus by \Cref{pd_main}, $\pd(R/I_4(G))=8$.    
}
\end{example}

\begin{remark}{\rm
    In this context, one should note that if $I(\H)$ is a sequentially Cohen-Macaulay edge ideal of a hypergraph $\H$, then $\pd(R/I(\H))=\b(I(\H))$ \cite[Corollary 3.33]{MoreyVillarreal}. Also, it is well-known that if $G$ is chordal, then $I(G)$ is sequentially Cohen-Macaulay. Now, due to these facts and \Cref{pd_main}, one can ask whether, for a chordal graph $G$ and $t\geq 3$, $J_t(G)$ is sequentially Cohen-Macaulay or not. However, the answer to this question is negative (see \cite[Proposition 4.3]{ADGRS}). Moreover, a natural question that arises from this discussion is the following:
    \begin{question}
     If $I(G)$ is sequentially Cohen-Macaulay, then for all $t\ge 3$ do we have $\pd(R/J_t(G))= \b(J_t(G))$ ?   
    \end{question}
     Although this happens in the case of chordal graphs, this question has a negative answer in general. For example, if we consider the cycle $C_5$ of length $5$, then $I(C_5)$ is sequentially Cohen-Macaulay but $\pd(R/I_3(C_5))=3>2=\b(I_3(C_5))$.
    }
\end{remark}

Next, as a corollary of \Cref{pd_main}, we partially generalize a celebrated result of Herzog–Hibi–Zheng \cite{HerzogHZ2006}, which establishes the equivalence between the Cohen–Macaulay and the unmixed properties for edge ideals of chordal graphs.

\begin{corollary}\label{cor-CM}
        Let $G$ be a chordal graph and $t\ge 2$ be an integer. Then $J_{t}(G)$ is Cohen-Macaulay if and only if $J_{t}(G)$ is unmixed.
\end{corollary}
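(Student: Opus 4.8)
The plan is to deduce \Cref{cor-CM} from \Cref{pd_main} together with the standard characterization of Cohen-Macaulayness via the Auslander-Buchsbaum formula. Recall that for a square-free monomial ideal $J_t(G)\subseteq R=\K[x_1,\ldots,x_n]$, the ideal is Cohen-Macaulay precisely when $\pd(R/J_t(G))=\h(J_t(G))$, since $\depth(R/J_t(G))=n-\pd(R/J_t(G))$ by Auslander-Buchsbaum and $\dim(R/J_t(G))=n-\h(J_t(G))$. The forward direction (Cohen-Macaulay implies unmixed) is classical and holds for any ideal, so the content is entirely in the reverse direction.

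For the nontrivial direction, suppose $J_t(G)$ is unmixed. By definition of unmixedness, every associated prime of $J_t(G)$ has the same height; equivalently, every minimal vertex cover of $\H(G,t)$ has the same cardinality. This means the minimum and maximum cardinalities of a minimal vertex cover coincide, i.e. $\h(J_t(G))=\b(J_t(G))$. Now I invoke \Cref{pd_main}: since $G$ is chordal, $\pd(R/J_t(G))=\b(J_t(G))$. Combining these two equalities gives $\pd(R/J_t(G))=\h(J_t(G))$, which is exactly the Cohen-Macaulay condition described above. Hence $J_t(G)$ is Cohen-Macaulay.

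I would organize the write-up by first stating the equivalence ``$J_t(G)$ Cohen-Macaulay $\iff \pd(R/J_t(G))=\h(J_t(G))$'' as the algebraic backbone, then noting that unmixedness is equivalent to $\h(J_t(G))=\b(J_t(G))$ directly from the description of associated primes as minimal vertex covers given in the preliminaries. The whole argument is then a two-line chain of equalities feeding \Cref{pd_main} in at the key moment. The main (and really only) subtlety to flag is making sure the unmixedness hypothesis is translated correctly: unmixed should be interpreted as all \emph{associated} primes (equivalently all minimal primes, since $J_t(G)$ is radical) having equal height, so that $\h=\b$; one must not conflate this with merely the existence of a single minimal cover of a given size. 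Since there is no real obstacle here—the reverse implication is an immediate corollary once $\pd=\b$ is in hand—the proof is short, and its only purpose is to package \Cref{pd_main} into the Cohen-Macaulay statement. Note in particular that this gives characteristic-independence of the Cohen-Macaulay property, as remarked after the statement.
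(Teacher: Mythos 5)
Your proposal is correct and follows exactly the paper's route: the paper's own proof is the one-line observation that the statement follows from \Cref{pd_main} together with the Auslander--Buchsbaum formula, which is precisely the chain $\mathrm{unmixed} \Rightarrow \h(J_t(G))=\b(J_t(G)) = \pd(R/J_t(G)) \Rightarrow$ Cohen--Macaulay that you spell out. Your write-up simply makes explicit the translation of unmixedness into $\h=\b$ via minimal vertex covers and of Cohen--Macaulayness into $\pd=\h$, which the paper leaves implicit.
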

\begin{proof}
    The proof follows from \Cref{pd_main} and the Auslander-Buchsbaum formula.
\end{proof}

Note that in \cite{HerzogHZ2006}, the authors provided an explicit combinatorial characterization of Cohen--Macaulay chordal graphs and, based on this, determined the Gorenstein chordal graphs, which turn out to be precisely disjoint unions of edges. 

However, an important observation regarding the $t$-connected ideals of graphs is that two non-isomorphic graphs can have the same $t$-connected ideal for $t \ge 3$. For example, let $G_1 = K_n$ for some $n \ge 3$, and let $G_2$ be obtained from $K_n$ by removing exactly one edge (see, for instance, \Cref{completeoneedge}). Then one can observe that $J_t(G_1) = J_t(G_2)$ for each $t \ge 3$, while $G_1$ and $G_2$ are non-isomorphic. 

\begin{figure}[h!]
\centering
\begin{tikzpicture}[scale=.75,
    vertex/.style={circle, fill=black, inner sep=1.8pt},
    textlabel/.style={draw=none, fill=none, font=\small}
]

\coordinate (x1) at (90:1.5);
\coordinate (x2) at (30:1.5);
\coordinate (x3) at (330:1.5);
\coordinate (x4) at (270:1.5);
\coordinate (x5) at (210:1.5);
\coordinate (x6) at (150:1.5);

\foreach \i in {1,...,6}
  \foreach \j in {\i,...,6} {
    \ifnum\i=\j\else
      \draw (x\i) -- (x\j);
    \fi
  }

\node[vertex,label=above:$x_1$] at (x1) {};
\node[vertex,label=right:$x_2$] at (x2) {};
\node[vertex,label=right:$x_3$] at (x3) {};
\node[vertex,label=below:$x_4$] at (x4) {};
\node[vertex,label=left:$x_5$] at (x5) {};
\node[vertex,label=left:$x_6$] at (x6) {};

\node[textlabel] at (0,-2.8) {$K_6$};

\begin{scope}[shift={(5,0)}]

\coordinate (y1) at (90:1.5);
\coordinate (y2) at (30:1.5);
\coordinate (y3) at (330:1.5);
\coordinate (y4) at (270:1.5);
\coordinate (y5) at (210:1.5);
\coordinate (y6) at (150:1.5);

\foreach \i in {1,...,6}
  \foreach \j in {\i,...,6} {
    \ifnum\i=\j\else
      \ifnum\i=1 \ifnum\j=2
      \else
        \draw (y\i) -- (y\j);
      \fi
      \else
        \ifnum\i=2 \ifnum\j=1
        \else
          \draw (y\i) -- (y\j);
        \fi
      \else
        \draw (y\i) -- (y\j);
      \fi
      \fi
    \fi
  }

\node[vertex,label=above:$x_1$] at (y1) {};
\node[vertex,label=right:$x_2$] at (y2) {};
\node[vertex,label=right:$x_3$] at (y3) {};
\node[vertex,label=below:$x_4$] at (y4) {};
\node[vertex,label=left:$x_5$] at (y5) {};
\node[vertex,label=left:$x_6$] at (y6) {};

\node[textlabel] at (0,-2.8) {$K_6 \setminus \{x_1,x_2\}$};

\end{scope}

\end{tikzpicture}

\caption{$K_6$ and an edge removed.}\label{completeoneedge}
\end{figure}

Interestingly, there is a one-to-one correspondence between finite simple graphs and their edge ideals. In view of this, it is challenging to describe the combinatorial structure of chordal graphs whose $t$-connected ideal is Cohen--Macaulay for a fixed $t \ge 3$. Furthermore, it would be interesting to investigate whether the complete intersection property and the Gorenstein property coincide for $t$-connected ideals of chordal graphs, analogous to the case of edge ideals of chordal graphs.

\section{Concluding Remarks}\label{sec remark}

In this section, by \textit{hypergraphs $\H(G,t)$ induced from a graph} $G$, we mean a class of $t$-uniform hypergraphs for which $\H(G,2)=G$. In other words, the edge ideal $I(\H(G,t))$ can be viewed as a higher degree generalization of $I(G)$. In this article, we have shown in \Cref{reg_main} and \ref{pd_main} that if $I(\H(G,t))$ corresponds the ideal $J_t(G)$ of a chordal graph $G$, then $\reg(R/I(\H(G,t)))=(t-1)\nu(\H(G,t))$ and $\pd(R/I(\H(G,t)))=\b(I(\H(G,t)))$. Also, it follows from \cite[Theorem 3.12]{DRSV23} that if the complement of $G$ is chordal, then $I(\H(G,t))$ has a linear resolution. In view of this, the following question arises naturally, which nicely extends the edge ideals to a higher degree from the perspective of chordal graphs.

\begin{question}
    What type of $t$-uniform hypergraphs $\H(G,t)$ induced from a graph $G$ satisfy the following three conditions simultaneously for all $t\geq 2$:
    \begin{enumerate}
        \item[(i)] $\reg(R/I(\H(G,t)))=(t-1)\nu(\H(G,t))$ when $G$ is chordal,
        \item[(ii)] $\pd(R/I(\H(G,t)))=\b(I(\H(G,t)))$ when $G$ is chordal,
        \item[(iii)]  $I(\H(G,t))$ has a linear resolution when the complement of $G$ is chordal.
    \end{enumerate}
\end{question}

First, one may think of answering the above question for the existing classes of edge ideals of $t$-uniform hypergraphs induced from a graph, such as the $t$-path ideals and the $t$-clique ideals of graphs. Note that the $t$-path ideals fail to satisfy conditions (i) and (ii) of the above question (see \cite[Theorem 5.3, 5.8]{2024pathideals}). However, to the best of our knowledge, it is still not known whether $t$-path ideals satisfy condition (iii) or not. 

Next, let us consider the $t$-clique ideal of a graph \cite[Definition 3.1]{Moradi2018}. Then condition (iii) of the above question holds true \cite[Corollary 3.4]{Moradi2018}. We do not know about the condition (ii). However, condition (i) is not true for $ t$-clique ideals, which follows from the following example.

\begin{example}\label{clique example}{\rm
    Let $G_{t,r}=G_1\cup \cdots \cup G_{r+1}$ be a graph with $G_{i}\simeq K_t$ for all $i\in\{1,\ldots,r+1\}$ and there is a vertex $x\in V(G_{t,r})$ such that $V(G_i)\cap V(G_j)=\{x\}$ for all distinct $i$ and $j$. Let $\H(G_{t,r},t)$ be the corresponding hypergraph of the $t$-clique ideal of $G_{t,r}$. Then we have 
    $$I(\H(G_{t,r},t))=x\l \mathbf{x}_{V(G_i)\setminus\{x\}}\mid 1\leq i\leq r+1\r.$$ 
    In this case, one can observe that $\reg(R/I(\H(G_{t,r},t)))=(t-2)(r+1)+1$, whereas the induced matching number of $\H(G_{t,r},t)$ is $1$. Therefore, we have 
    $$\reg(R/I(\H(G_{t,r},t)))-(t-1)\nu(\H(G_{t,r},t))=(t-2)r.$$ 
    In other words, the regularity can be arbitrarily larger than the general lower bound for any given $t>2$.
    }
\end{example}

Moving on, in the case of edge ideals of graphs, there are several classes of graphs other than the chordal one for which the equalities $\reg(R/I(G))=\nu(G)$ (see \cite[Theorem 14]{BBH2019}) and $\pd(R/I(G))=\b(I(G))$ (for instance, sequentially Cohen-Macaulay edge ideals, etc.) hold. In this article, we have extended the above formulas in \Cref{reg_main} and \ref{pd_main} for the ideal $J_t(G)$ of a chordal graph $G$. Thus, the following question naturally arises in this context.

\begin{question}
    Find those classes of graph $G$ for which $\reg(R/J_{t}(G))=(t-1)\nu_{t}(G)$ and $\pd(R/J_{t}(G))=\b(J_t(G))$ for all $t\geq 2$, where $J_t(G)$ denotes the Stanley-Reisner ideal of $\mathrm{Ind}_t(G)$.
\end{question}

\begin{remark}\normalfont
    In \cite[Theorem 6.2]{AJM2024} it was shown that if $G$ is `gap-free and $t$-claw free', then $J_t(G)$ has linear quotients, and thus has linear resolution. In this case, one can see that $\nu_t(G)=1$, and thus gap-free and $t$-claw free graphs are an example of the class of graphs where the regularity formula in Question 5.3 is satisfied. 
\end{remark}

\noindent
{\bf Acknowledgements.} The authors would like to thank the anonymous referees for their careful reading and numerous valuable suggestions, which have significantly enhanced the clarity and readability of the paper. The first and the second authors are supported by Postdoctoral Fellowships at Chennai Mathematical Institute. The third author would like to thank the National Board for Higher Mathematics (India) for the financial support during his stay at the Chennai Mathematical Institute. All the authors are partially supported by a grant from the Infosys Foundation.

\subsection*{Data availability statement} Data sharing is not applicable to this article as no new data were created or analyzed in this study.

\subsection*{Conflict of interest} The authors declare that they have no known competing financial interests or personal relationships that could have appeared to influence the work reported in this paper.

\bibliographystyle{abbrv}
\bibliography{ref}

\end{document}